\documentclass[a4paper,11pt,reqno]{amsart}
\usepackage{amssymb}
\usepackage{fancyhdr}
\usepackage[margin=2.9cm]{geometry}
\usepackage{hyperref}
\usepackage{enumitem}

\hypersetup{
    colorlinks=true,
    linkcolor=blue,
    citecolor=blue,      
    urlcolor=black,
}

\newtheorem{Th}{Theorem}
\newtheorem{Lemma}[Th]{Lemma}
\newtheorem{Cor}[Th]{Corollary}

\allowdisplaybreaks[1]

\addtolength{\oddsidemargin}{-.45in}
\addtolength{\evensidemargin}{-.45in}
\addtolength{\textwidth}{0.9in}
\addtolength{\topmargin}{-.40in}
\addtolength{\textheight}{0.6in}
\setlength{\headheight}{13pt}
\setlength{\footskip}{13pt}


\begin{document}

\pagestyle{fancy}
\lhead{}
\chead{}
\rhead{\thepage}
\cfoot{}
\lfoot{}
\rfoot{}
\renewcommand{\headrule}{}

\title{On a specific family of orthogonal polynomials of Bernstein-Szeg\"o type} 

\author{Martin Nicholson} 

\begin{abstract}  
We study a class of weight functions on $[-1,1]$, which are special cases of the general weights studied by Bernstein and Szeg\"o, as well as their extentions to the interval $[-a,1]$ for a continuous parameter $a>0$. These weights are parametrized by two positive integers. As these integers tend to infinity, these weights approximate certain weight functions on $\mathbb{R}$ considered in the earlier literature in connection with orthogonal polynomials related to elliptic functions. It turns out that an orthogonal polynomial of certain degree corresponding to these weights has a particularly simple form with known roots. This fact allows us to find explicit quadrature formulas for these weights and construct measures on $\mathbb{R}$ with identical moments. We also find finite analogs of some improper integrals first studied by Glaisher and Ramanujan, and show that some of the  functions used in this work are in fact generating functions of certain finite trigonometric power sums.
\end{abstract}

\clearpage\maketitle
\thispagestyle{empty}
\vspace{-30pt}

\section{Introduction}

Entry 4.123.6 in Gradshteyn and Ryzhik's {\it Table of Integrals, Series, and Products} \cite{gr} reads
\begin{equation}\label{gr}
    \int\limits_0^\infty \frac{\sin (ax)\sinh (bx)}{\cos (2 ax)+\cosh \left(2bx\right)}\,x^{p-1}dx=\frac{\Gamma(p)}{(a^2+b^2)^{p/2}}\,\sin\left(p\tan^{-1}\frac{a}{b}\right)\sum_{k=0}^\infty\frac{(-1)^k}{(2k+1)^p},\quad p>0.
\end{equation}
The limit $p\to+0$ of \eqref{gr} is
\begin{equation}\label{arctan1}
    \int\limits_0^\infty \frac{\sin (x)\sinh (x/a)}{\cos (2 x)+\cosh \left(2x/a\right)}\frac{dx}{x}=\frac{\tan^{-1} a}{2}.
\end{equation}
Additionally, when $a=b$ and $p/4\in\mathbb{N}$ one finds from \eqref{gr}
\begin{equation}\label{z}
    \int\limits_0^\infty \frac{\sin (x)\sinh (x)}{\cos (2 x)+\cosh \left(2x\right)}\,x^{4k-1}dx=0,\quad k\in\mathbb{N}.
\end{equation}
Integrals of the type \eqref{arctan1} and \eqref{z} were first studied by Glaisher \cite{glaisher}. More recently, they were studied in connection with integrals of the Dedekind eta-function \cite{glasser},\cite{patkowski},\cite{coffey}.

In \cite{nicholson}, we have generalized \eqref{arctan1} as
\begin{equation}\label{form1}
\int\limits_0^{1}\frac{\sin \bigl(n \sin^{-1}{t}\bigr)\sinh \bigl(n \sinh^{-1}({t}/a)\bigr)}{\cos \bigl( 2 n \sin^{-1}{t}\bigr)+\cosh \bigl(2 n \sinh^{-1}({t}/a)\bigr)}\frac{dt}{t \sqrt{(1-t^2)(1+t^2/a^2)}}=\frac{\tan^{-1} a}{2},
\end{equation}
where $n$ is a positive odd integer, and we have also shown that
\begin{equation}\label{coscosheven}
    \int\limits_{0}^{1}\frac{\cos (n \sin^{-1}{t})\cosh (n \sinh^{-1}{t})}{\cos (2 n \sin^{-1}{t})+\cosh (2 n \sinh^{-1}{t})}\,\frac{tdt}{\sqrt{1-t^4}}=0,
\end{equation}
where $n$ is a positive even integer. Our proof was based on explicit calculations using the fact that the roots of $\cos \bigl( 2 n \sin^{-1}t\bigr)+\cosh \bigl(2 n \sinh^{-1}(t/a)\bigr)$ (which is a polynomial in $t$) can be determined in closed form (see also Sections \ref{rq} and \ref{theta1} of the present paper for similar calculations). Two alternative proofs of \eqref{form1} were given in \cite{teruo} and \cite{po1ynomial}. Motivation for considering such integrals came from the mapping $\alpha_z=2n\sinh^{-1}\sin\frac{\pi z}{2n}$ encountered in the theory of Dirichlet problem on finite nets \cite{pw}, as discussed in Section 5 of \cite{nicholson}. Note that $\alpha_z\sim \pi z$, when $n\to\infty$.

In this paper, we will be concerned with integration formulas similar to the following:
\begin{Th}\label{th1.0} Let $n$ and $m$ be positive odd integers. Then
    \begin{equation}\label{f1}
    \int\limits_{-1}^{1}\frac{\sin \bigl(n \sin^{-1}\!\sqrt{t}\bigr)\sinh \bigl(m \sinh^{-1}\!\sqrt{t}\bigr)}{\cos \bigl(2 n \sin^{-1}\!\sqrt{t}\bigr)+\cosh \bigl(2 m \sinh^{-1}\!\sqrt{t}\bigr)}\frac{t^jdt}{\sqrt{1-t^2}}=\begin{cases}
        \pi/2,\quad &j=-1,\\
        0,\quad &j=0,1,\ldots,\frac{m+n-2}{2},
    \end{cases}
\end{equation}
\begin{equation}\label{f2}
    \nonumber\int\limits_{-1}^{1}\frac{\cos \bigl(n \sin^{-1}\!\sqrt{t}\bigr)\cosh \bigl(m \sinh^{-1}\!\sqrt{t}\bigr)}{\cos \bigl(2 n \sin^{-1}\!\sqrt{t}\bigr)+\cosh \bigl(2 m \sinh^{-1}\!\sqrt{t}\bigr)}\,t^jdt=0,\quad j=0,1,\ldots,\tfrac{m+n-4}{2}.
\end{equation}
\end{Th}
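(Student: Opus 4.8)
The plan is to reduce both integrals to residue/contour computations by exploiting the fact, already used in \cite{nicholson}, that the denominator $\cos(2n\sin^{-1}\sqrt t)+\cosh(2m\sinh^{-1}\sqrt t)$ is a polynomial in $t$ whose roots are known in closed form. Concretely, I would substitute $t=\sin^2\theta$ (or work with the uniformizing variable $z$ via $\sqrt t=\sin z$, so that $\sinh^{-1}\sqrt t$ becomes an imaginary shift of the same variable) to turn each integrand into a rational function of $e^{i\theta}$ times $d\theta$ over a full period, and then read off the answer as a sum of residues inside the unit circle. The appearance of $\sqrt{1-t^2}$ in \eqref{f1} and its absence in \eqref{f2}, together with the parity of $j$'s upper limit, strongly suggests that \eqref{f1} is the "odd/sine" half and \eqref{f2} the "even/cosine" half of one and the same contour integral, split according to the symmetry $t\mapsto -t$ (equivalently $\theta\mapsto\pi-\theta$ or a half-period shift). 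So the first real step is to find the single generating identity of which \eqref{f1} and \eqref{f2} are the two symmetry components.

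**Next**, I would make the Bernstein--Szegő structure explicit. Writing $C=\cos(n\sin^{-1}\sqrt t)$, $S=\sin(n\sin^{-1}\sqrt t)$, $\mathcal C=\cosh(m\sinh^{-1}\sqrt t)$, $\mathcal S=\sinh(m\sinh^{-1}\sqrt t)$, one has $\cos(2n\sin^{-1}\sqrt t)=1-2S^2$ and $\cosh(2m\sinh^{-1}\sqrt t)=1+2\mathcal S^2$, so the denominator equals $2(\mathcal S^2+ S^2\cdot(\ldots))$—more usefully, it factors over $\mathbb C$ as a product $\prod(t-t_k)$ with the $t_k$ coming from $\cos(n\sin^{-1}\sqrt t)\pm i\cosh(m\sinh^{-1}\sqrt t)=0$ type equations, exactly as in the $a$-interval computations the author references in Sections \ref{rq} and \ref{theta1}. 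The key algebraic point I expect to need is that $S\mathcal S/(\text{denominator})$ and $C\mathcal C/(\text{denominator})$ are, after the trig substitution, proper rational functions of degree low enough that multiplying by $t^j$ for $j$ up to $(m+n-2)/2$ (resp. $(m+n-4)/2$) keeps them proper—this is precisely why the vanishing stops at that degree, and why $j=-1$ in \eqref{f1} produces a simple pole at $t=0$ contributing the residue $\pi/2$. I would verify the degree count by noting that $\sin(n\sin^{-1}\sqrt t)/\sqrt t$ has degree $(n-1)/2$ in $t$ and $\sinh(m\sinh^{-1}\sqrt t)/\sqrt t$ has degree $(m-1)/2$, while the denominator has degree $(m+n)/2$; the $1/\sqrt t$ from each factor is absorbed by the measure in a way that must be tracked carefully when $n,m$ are odd.

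**The main obstacle** I anticipate is handling the algebraic function $\sqrt{1-t^2}$ in \eqref{f1} correctly on the contour: after $t=\sin^2\theta$ this becomes $\cos(2\theta)$ up to sign, i.e. it rationalizes, but one must be scrupulous about which branch is selected on each subinterval of $[-1,1]$ and about the orientation of the resulting contour, since the integrand is a priori only defined with the principal branches of $\sin^{-1}$ and $\sinh^{-1}$. A clean way around this is to prove the formulas first for $t\in[0,1]$ by the substitution $\sqrt t=\sin z$, extend by the $t\mapsto -t$ symmetry (which is where the distinction between the two formulas and the role of $\sqrt{1-t^2}$ vs.\ no weight becomes transparent), and only at the end check that the contributions from $[-1,0]$ combine as claimed. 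Once the contour is fixed, the vanishing for $j=0,\dots,\frac{m+n-2}{2}$ (resp.\ $\frac{m+n-4}{2}$) is the statement that a proper rational function with all poles on the unit circle—or rather, a function analytic inside after the substitution—has zero integral, which should follow either by a residue count showing no poles are enclosed, or by recognizing the integrand as an exact derivative; and the value $\pi/2$ for $j=-1$ is an isolated simple-pole residue at the origin. I would also sanity-check against the $m=n$, or the $a\to\infty$ limit, reducing to \eqref{coscosheven} and the known entry \eqref{z}.
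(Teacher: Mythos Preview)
Your plan rests on a premise that fails for the theorem as stated: the roots of the polynomial $\cos(2n\sin^{-1}\sqrt t)+\cosh(2m\sinh^{-1}\sqrt t)$ are \emph{not} known in closed form when $m\neq n$. The paper says exactly this in the introduction, and it is the reason the author turns to Bernstein--Szeg\H{o} theory rather than repeating the explicit-root calculation of \cite{nicholson}, which only handles the symmetric case. The factoring you sketch leads to the transcendental condition $n\sin^{-1}\sqrt{t_k}\pm im\sinh^{-1}\sqrt{t_k}=(2j{+}1)\pi/2$, which cannot be solved for $t_k$ in elementary terms, so the ``read off the answer as a sum of residues'' step cannot be executed. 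Your substitution $t=\sin^2\theta$ is also the wrong one: it reaches only $[0,1]$, and the $t\mapsto -t$ symmetry you invoke swaps $m$ and $n$ (the paper records $I(n,m,j)=(-1)^{j-1}I(m,n,j)$), so it does not reduce the $[-1,0]$ piece to anything already computed.

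You do already have the correct escape route without quite recognizing it. Your fallback phrase ``a residue count showing no poles are enclosed'' requires knowing only where the roots \emph{aren't}, not where they are. After the correct substitution $t=\cos\theta$ (which covers all of $[-1,1]$), the denominator becomes $|h(e^{i\theta})|^2$ for an explicit polynomial $h$, and the entire problem reduces to showing $h(z)\neq 0$ for $|z|<1$; the paper proves this by Rouch\'e's theorem (Lemma~\ref{lemma}). With that in hand, the general Bernstein--Szeg\H{o} formula \eqref{pk} identifies $2\pi^{-1/2}\eta(t)$ as the degree-$\tfrac{m+n}{2}$ orthonormal polynomial for the weight $\{\rho(t)\sqrt{1-t^2}\}^{-1}$, and its orthogonality \emph{is} the $j\ge 0$ case of \eqref{f1}; equation \eqref{f2} follows from the next polynomial $p_{(m+n)/2+1}$, and the $j=-1$ case of \eqref{f1} from the reproducing property of the Christoffel--Darboux kernel at $0$. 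Your degree count is correct and explains the upper bounds on $j$; what is missing is the realization that the zero-location statement is the crux, needs its own argument, and that once proved the Bernstein--Szeg\H{o} framework finishes everything without any explicit residues.
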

For generic integers $n$ and $m$, the roots of the polynomial $\cos \bigl( 2 n \sin^{-1}t\bigr)+\cosh \bigl(2 m \sinh^{-1}t\bigr)$ are not known in closed form. Therefore other methods are necessary to study these more general integrals. 

Our proof of Theorem \ref{th1.0} is based on the theory of orthogonal polynomials in its elementary form. In particular, we will use Bernstein-Szeg\"o orthogonal polynomials, a brief overview of which is given in Section \ref{sb}. After studying the aforementioned function $\cos \bigl( 2 n \sin^{-1}t\bigr)+\cosh \bigl(2 m \sinh^{-1}t\bigr)$ in Section \ref{function}, and proving Theorem \ref{th1.0} in Section \ref{pth1}, we turn to more general situation and consider modifications of Theorem \ref{th1.0} by a continuous parameter $a>0$ in Section \ref{a}. The case where both parameters $n$ and $m$ are even is studied in Section \ref{even}. More complicated weight functions are studied in Section \ref{d}.

In a series of papers \cite{berg},\cite{ismail}, Berg, Valent, and Ismail have considered orthogonal polynomials on $\mathbb{R}$ related to elliptic functions. Weight function for these orthogonal polynomails is equivalent to
\begin{equation}\label{iv}
    \frac{1}{\cos \bigl(2 \sqrt{x}\bigr)+\cosh \bigl(2\sqrt{x/a}\bigr)}
\end{equation}
after rescaling of the variable. Recently, there has been a flurry of activity in studying different aspects of the integrals with the weight function \eqref{iv}, e.g. \cite{berndt},\cite{kuznetsov},\cite{xu},\cite{bradshaw},\cite{pan},\cite{rui},\cite{zhou}.
The weight functions
\begin{equation}\label{wfmn}
    \frac{1}{\cos \bigl(2 n \sin^{-1}\!\sqrt{t}\bigr)+\cosh \bigl(2 m \sinh^{-1}\!\sqrt{t}\bigr)}\frac{1}{\sqrt{1-t^2}},\qquad t\in[-1,1],
\end{equation}
parametrized by two positive integers $n$ and $m$, approximate the weight functions \eqref{iv} in the limit ${n,m\to\infty}$. For example, \eqref{f1} is a finite analog of the improper integral
\[
\int\limits_{\mathbb{R}} \frac{\sin\bigl(\sqrt{x}\bigr)\sinh \bigl(\sqrt{x/a}\bigr)}{\cos \bigl(2 \sqrt{x}\bigr)+\cosh \bigl(2\sqrt{x/a}\bigr)}\,x^{j}dx=\begin{cases}
    \pi/2,\quad &j=-1,\\
    0,\quad &j=0,1,2,\ldots
\end{cases}
\]
As can be seen from \eqref{f1}, $\sin (n \sin^{-1}\!\sqrt{t})\sinh (m \sinh^{-1}\!\sqrt{t})$ is a degree $\frac{m+n}{2}$ orthogonal polynomial corresponding to the weight function \eqref{wfmn}. Moreover, the roots of this orthogonal polynomial are known. This will allow us to find explicit Gauss quadrature formulas for the weight function \eqref{wfmn} in Section \ref{gq}. By applying these quadrature formulas to particular polynomials in Section \ref{section_generating_functions}, we derive a finite analog of the generating function formula due to Kuznetsov \cite{kuznetsov}. 

In Section \ref{contructing_measures}, we find extentions of certain formulas from \cite{lubinsky2} and then use these results to construct measures on $\mathbb{R}$ that have the same first $n+m-1$ moments as \eqref{wfmn}. Then, considering the limit of large $n,m$ for these measures, we establish connection of our work with \cite{kuznetsov2}.

Consider the integral for a positive odd integer $k$
\begin{equation}\label{353}
    \int\limits_0^\infty \frac{\sin (kx)}{\cos (x)+\cosh \left(x\right)}\,\frac{dx}{x}=\frac{\pi}{4},
\end{equation}
which was submitted by Ramanujan to the Journal of the Indian Mathematical Society as a problem number $353$ \cite{ramanujan}. More information on the history of \eqref{353} can be found in \cite{berndt}. 
In Section \ref{rq}, we will prove a finite analog of \eqref{353} that contains an additional integer parameter. When this integer parameter goes to infinity, one recovers \eqref{353} in the limit. 

In Section \ref{theta1}, a calculation similar to that of Section \ref{rq} is used to evaluate a certain integral in terms of a finite trigonometric sum. This turns out to be a finite analog of another integral due to Glaisher \cite{glaisher} related to theta series. We also show that this analog is equivalent to generating function formula for a certain finite trigonometric sum.

The indeterminate moment problem for orthogonal polynomails associated with the weight functions 
\[
 \frac{x}{\cosh \bigl(2\sqrt{x/a}\bigr)-\cos \bigl(2 \sqrt{x}\bigr)}
\]
has been studied in \cite{ivy}. We briefly consider finite analogs of such weight functions in Section \ref{appendix}.

\section{General Bernstein-Szego polynomials}\label{sb}

In this section, we closely follow the book \cite{szego}. The trigonometric polynomial in $\theta$ of degree $k$ is
\[
g(\theta)=a_0+\sum_{j=1}^k\left\{a_j\cos(j\theta)+b_j\sin(j\theta)\right\}.
\]
\begin{Th}[\cite{szego}, Theorem 1.2.2]\label{th1.2.2}
    Let $g(\theta)$ be a trigonometric polynomial with real coefficients which is nonnegative for all real values of $\theta$ and $g(\theta)\not\equiv 0$. Then a representation $g(\theta)=|h(e^{i\theta})|^2$ exists such that $h(z)$ is a polynomial of the same degree as $g(\theta)$, with $h(z)\neq 0$ in $|z|<1$, and $h(0)>0$. This polynomial is uniquely determined. If $g(\theta)$ is a cosine polynomial, $h(z)$ is a polynomial with real coefficients.
\end{Th}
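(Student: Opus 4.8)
The plan is to clear the denominator $z=e^{i\theta}$, pass to an ordinary polynomial of degree $2k$, and then distribute its $2k$ roots according to their position relative to the unit circle; the candidate $h$ is the product of the factors coming from the roots in $\overline{|z|\ge 1}$.

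First I would write $g(\theta)=\sum_{j=-k}^{k}c_j e^{ij\theta}$; since $g$ is real-valued, $c_{-j}=\overline{c_j}$, and since $g$ has exact degree $k$, $c_k\neq 0$. Set $P(z)=\sum_{j=-k}^{k}c_j z^{k+j}$, a polynomial of degree exactly $2k$ with $P(0)=c_{-k}=\overline{c_k}\neq 0$, satisfying $P(e^{i\theta})=e^{ik\theta}g(\theta)$. The Hermitian symmetry of the coefficients gives the functional equation $z^{2k}\,\overline{P(1/\bar z)}=P(z)$, so the $2k$ roots of $P$ are invariant, with multiplicity, under $\zeta\mapsto 1/\bar\zeta$; in particular roots in $|z|<1$ are matched one-to-one with roots in $|z|>1$. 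A root $\zeta_0=e^{i\theta_0}$ on the unit circle is matched with itself, and I claim its multiplicity is even: the local order of vanishing of $P$ at $\zeta_0$ equals that of $g$ at $\theta_0$ (as $z\mapsto e^{i\theta}$ is a local diffeomorphism and $e^{ik\theta}$ is non-vanishing), and a non-negative smooth function vanishes to even order. This evenness is the one genuinely non-formal point; everything else is bookkeeping.

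Hence the $2k$ roots split as $k$ roots $w_1,\dots,w_k$ with $|w_i|\ge 1$ (the roots in $|z|>1$, together with half of each even-multiplicity root on the circle) and their partners $1/\bar w_1,\dots,1/\bar w_k$ in $\overline{|z|\le 1}$, so $P(z)=c_k\prod_{i=1}^k(z-w_i)(z-1/\bar w_i)$. Putting $h(z)=\gamma\prod_{i=1}^k(z-w_i)$ and using $e^{-i\theta}-\bar w_i=-\bar w_i e^{-i\theta}(e^{i\theta}-1/\bar w_i)$, one computes for $z=e^{i\theta}$
\[
|h(e^{i\theta})|^2=|\gamma|^2\Big(\prod_{i=1}^k(-\bar w_i)\Big)e^{-ik\theta}\,\frac{P(e^{i\theta})}{c_k}=|\gamma|^2\,\frac{\prod_{i=1}^k(-\bar w_i)}{c_k}\,g(\theta).
\]
Evaluating at a point where $g>0$ (such a point exists since $g\ge 0$, $g\not\equiv 0$) shows $c_k/\prod_i(-\bar w_i)>0$, so $\gamma=\sqrt{c_k/\prod_i(-\bar w_i)}>0$ gives $g(\theta)=|h(e^{i\theta})|^2$ with $h$ of degree $k$, $h\neq 0$ in $|z|<1$, and $h(0)=\gamma(-1)^k\prod_i w_i\neq 0$ since each $w_i\neq 0$. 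Multiplying $h$ by the unimodular constant $\overline{h(0)}/|h(0)|$ makes $h(0)>0$ while preserving the other properties.

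For uniqueness, given another such $h_1$, form the reversed polynomials $h^*(z)=z^k\overline{h(1/\bar z)}$ and $h_1^*$; on $|z|=1$ one has $h\,h^*=e^{ik\theta}g=h_1\,h_1^*$, hence equality of these degree-$2k$ polynomials identically. Since $h$ has all roots in $\overline{|z|\ge 1}$ and $h^*$ all roots in $\overline{|z|\le 1}$, and a circle root of $hh^*$ of multiplicity $2\mu$ is split evenly between them, the root multiset of $h$ is determined by $hh^*$; the same holds for $h_1$, so $h_1=c\,h$ with $|c|=1$ (from $|h_1|=|h|$ on the circle) and $c>0$ (from $h(0),h_1(0)>0$), i.e.\ $c=1$. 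Finally, if $g$ is a cosine polynomial then $c_j=c_{-j}$, so $c_j\in\mathbb{R}$ and $P$ has real coefficients; choosing $\{w_i\}$ closed under complex conjugation (possible because $\zeta\mapsto 1/\bar\zeta$ commutes with conjugation and conjugate roots have equal modulus) makes $\prod_i(z-w_i)$, and hence $h$ after the real normalization of $\gamma$ and sign, a polynomial with real coefficients. I expect the root-bookkeeping near the unit circle — the evenness of circle multiplicities and the matching argument in the uniqueness step — to be the only part requiring real care.
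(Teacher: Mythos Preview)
The paper does not give its own proof of this statement: Theorem~\ref{th1.2.2} is quoted verbatim from Szeg\H{o}'s book and used as a black box throughout Sections~\ref{sb}--\ref{a}, so there is nothing in the paper to compare your argument against.

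That said, your argument is the classical Fej\'er--Riesz factorization and is correct. Passing from $g$ to the self-reciprocal polynomial $P(z)=e^{ik\theta}g(\theta)$, pairing roots under $\zeta\mapsto 1/\bar\zeta$, using non-negativity of $g$ to force even multiplicity of unimodular roots, and then collecting the factors with $|w_i|\ge 1$ is exactly the standard route (and is, in fact, essentially how Szeg\H{o} proves it). Your uniqueness step via $hh^*=h_1h_1^*=P$ together with the observation that a unimodular root of $h$ of multiplicity $m$ forces the same multiplicity $m$ in $h^*$ is clean, and the cosine-polynomial case follows as you say by choosing the $\{w_i\}$ conjugation-invariant before normalizing. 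The only places that require care---even multiplicity on the circle and the circle-root bookkeeping in uniqueness---you have handled correctly.
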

Let $\rho(t)$ be a polynomial of precise degree $l$ and positive in $[-1,1]$. Then, orthonormal polynomials $p_k(t)$ (of degree $k$), which are associated with weight functions
\[
w(t)=\frac{1}{\rho(t)\sqrt{1-t^2}},\qquad t\in[-1,1],
\]
can be calculated explicitly provided $l<2k$. Namely, let $\rho(\cos\theta)=|h(e^{i\theta})|^2$ be the normalized representation of $\rho(\cos\theta)$ in the sense of Theorem \ref{th1.2.2}. Then, writing $h(e^{i\theta})=c(\theta)+is(\theta)$, $c(\theta)$ and $s(\theta)$ real, we have
\begin{equation}\label{pk}
    p_k(t)=\sqrt{\frac{2}{\pi}}\operatorname{Re}\bigl\{e^{ik\theta}\,\overline{h(e^{i\theta})}\bigr\}
    =\sqrt{\frac{2}{\pi}}\,\bigl\{c(\theta)\cos k\theta+s(\theta)\sin k\theta\bigr\}.
\end{equation}
These formulas must be modified for $l=2k$ by multiplying the right-hand member of \eqref{pk} by a certain constant factor. In this paper, we will only consider $l<2k$.

The other two weight functions for which the corresponding orthonormal polynomials can be calculated explicitly are:
\begin{alignat}{3}
w(t)&=\frac{1}{\rho(t)}\sqrt{1-t^2},\qquad &&p_k(t)=\sqrt{\frac{2}{\pi}}\,\frac{\operatorname{Im}\bigl\{e^{i(k+1)\theta}\,\overline{h(e^{i\theta})}\bigr\}}{\sin\theta},\qquad &&l<2k+2;\label{pk1}\\
w(t)&=\frac{1}{\rho(t)}\sqrt{\frac{1-t}{1+t}},\qquad &&p_k(t)=\sqrt{\frac{2}{\pi}}\,\frac{\operatorname{Im}\bigl\{e^{i(k+1/2)\theta}\,\overline{h(e^{i\theta})}\bigr\}}{\sin(\theta/2)},\qquad &&l<2k+1.\label{pk2}
\end{alignat}

\section{The function \texorpdfstring{$\cos (2 n \sin^{-1}\!\sqrt{t})+\cosh (2 m \sinh^{-1}\!\sqrt{t})$}{rho(t)}}\label{function}

The integral $I(n,m,j)$ on the left-hand side of \eqref{f1} satisfies
\[
I(n,m,j)=(-1)^{j-1}\,I(m,n,j).
\]
This symmetry means that it is enough to consider $m\ge n$. The expression
\begin{align*}
    \rho(t)&=\cos \bigl(2 n \sin^{-1}\!\sqrt{t}\bigr)+\cosh \bigl(2 m \sinh^{-1}\!\sqrt{t}\bigr)\\
    &=T_n(1-2t)+T_m(1+2t),
\end{align*}
(where $T_n(x)=\cos(n\cos^{-1}x)$ are Chebyshev polynomials of the first kind) is a polynomial in $t$ of degree
\[
\operatorname{deg}\rho=\begin{cases}
    m, &m> n,\\
    2\lfloor\frac{m}{2}\rfloor, &m=n.
\end{cases}
\]
One can write for $t\in[-1,1]$
\[
\rho(t)=\bigl|\sqrt{2}\,\cos \bigl( n \sin^{-1}\sqrt{t}-im \sinh^{-1}\sqrt{t}\bigr)\bigr|^2.
\]
Using logarithmic form of the functions $\sin^{-1}$, $\sinh^{-1}$, we obtain the representation
\[
\sqrt{2}\,\cos \bigl( n \sin^{-1}\sqrt{\cos \theta}-im \sinh^{-1}\sqrt{\cos \theta}\bigr)=i^ne^{-i (n+m)\theta/2}\,h\bigl(e^{i\theta}\bigr),
\]
\begin{align}\label{h}
    \nonumber h(z)=\frac{1}{\sqrt{2}}\bigg\{&\left(\sqrt{z^2+1}+1\right)^{\frac{m+n}{2}}\left(\sqrt{z^2+1}+z\right)^{\frac{m-n}{2}}\\
    &+(-1)^n\!\left(\sqrt{z^2+1}-1\right)^{\frac{m+n}{2}} \left(\sqrt{z^2+1}-z\right)^{\frac{m-n}{2}}\bigg\}.
\end{align}
By construction, $h(z)$ is a polynomial in $z$. An alternative form for $h(z)$ is
\[
h(z)=2^{\frac{1-m-n}{2}}\Bigg\{\sum_{j=0}^n\sum_{s=0}^{j/2}\binom{m+n}{m+j}\binom{j}{2s}(-z)^{j-2s}\bigl(1+z^2\bigr)^{s}+\sum_{j=1}^m\sum_{s=0}^{j/2}\binom{m+n}{n+j}\binom{j}{2s}z^{j-2s}\bigl(1+z^2\bigr)^{s}\Bigg\}.
\]
\begin{Lemma}\label{lemma}
\begin{enumerate}[label=\textnormal{(\roman*)}, wide=0pt]
    \item $\operatorname{deg}h=\operatorname{deg}\rho$.
    \item $h(0)>0$.
    \item $h(z)\neq 0$ in $|z|<1$.
\end{enumerate}
\end{Lemma}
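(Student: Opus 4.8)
The plan is to dispose of (ii) and (i) quickly — they are short — and then put the real effort into (iii).

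\emph{Claim (ii).} I would just substitute $z=0$ into \eqref{h}. Since $\sqrt{0+1}=1$ and $0^{(m+n)/2}=0$ (because $m+n\ge 2$), every term but the first drops out, giving $h(0)=2^{-1/2}\,2^{(m+n)/2}=2^{(m+n-1)/2}>0$.

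\emph{Claim (i).} This follows from (ii) combined with the identity $\rho(\cos\theta)=|h(e^{i\theta})|^2$, which is obtained by composing the two displays that precede \eqref{h}. Write $h(z)=\sum_{k=0}^{d}c_kz^k$ with $c_d\neq 0$ and $d=\deg h$. Then $|h(e^{i\theta})|^2=\sum_{k,l}c_k\overline{c_l}\,e^{i(k-l)\theta}$ has top harmonic $c_d\overline{c_0}\,e^{id\theta}$ with $c_0=h(0)\neq 0$, so it is a trigonometric polynomial of exact degree $d$. On the other hand $\rho(\cos\theta)$ has exact degree $\deg\rho$ as a cosine polynomial, because the top-degree monomial of $\rho$ contributes a nonzero multiple of $\cos(\deg\rho\cdot\theta)$, which the lower-order terms cannot cancel. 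Hence $\deg h=\deg\rho$.

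\emph{Claim (iii).} This is the heart of the lemma. Fix the principal branch of $u:=\sqrt{z^2+1}$. Since $\operatorname{Re}(z^2+1)=1+\operatorname{Re}(z^2)\ge 1-|z|^2\ge 0$ on the closed unit disc $\overline D$, with equality only at $z=\pm i$, the function $u$ is analytic on the open disc $D$, continuous on $\overline D\setminus\{\pm i\}$, and satisfies $\operatorname{Re}u\ge 0$ throughout. Set
\[
A=(u+1)^{\frac{m+n}{2}}(u+z)^{\frac{m-n}{2}},\qquad B=(u-1)^{\frac{m+n}{2}}(u-z)^{\frac{m-n}{2}},
\]
so that $\sqrt2\,h=A+(-1)^nB$ and, since $u^2-1=z^2$ and $u^2-z^2=1$, also $AB=z^{m+n}$. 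Neither $u+1$ nor $u+z$ vanishes on $\overline D$ (else $\operatorname{Re}u=-1$ or $u^2=z^2$), so $A$ is analytic on $D$ and zero-free on $\overline D$; thus it suffices to prove $|B|<|A|$ on $D$, for then $\sqrt2\,|h|\ge|A|-|B|>0$.

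The crucial computation is on $|z|=1$. There one has $\bigl(\operatorname{Re}(u\bar z)\bigr)^2-\bigl(\operatorname{Re}u\bigr)^2=\tfrac14\bigl(u^2\bar z^2+\bar u^2z^2-u^2-\bar u^2\bigr)$, and substituting $u^2=z^2+1$, $\bar u^2=\bar z^2+1$ and $z\bar z=1$ shows the right-hand side is $0$. Hence $\operatorname{Re}(u\bar z)=\pm\operatorname{Re}u$ on $|z|=1$, so $\{|u+z|,|u-z|\}=\{|u+1|,|u-1|\}$ there, and consequently on $|z|=1$
\[
\frac{|A|}{|B|}=\left(\frac{|u+1|}{|u-1|}\right)^{\!m}\quad\text{or}\quad\left(\frac{|u+1|}{|u-1|}\right)^{\!n},
\]
depending on $z$. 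Because $|u+1|^2-|u-1|^2=4\operatorname{Re}u\ge 0$ and $m,n\ge 1$, this ratio is $\ge 1$, i.e. $|B|\le|A|$ on $\partial D$. Finally, $g:=B/A$ is analytic and bounded on $D$ (as $|A|$ is bounded below on the compact set $\overline D$), extends continuously to $\overline D\setminus\{\pm i\}$ with $|g|\le 1$ on $\partial D\setminus\{\pm i\}$, and $g(0)=B(0)/A(0)=0$. By the maximum modulus principle — the two omitted boundary points being harmless for a bounded function — $|g|\le 1$ on $D$, and since $g$ is non-constant ($g(0)=0$ but $B\not\equiv 0$) we get $|g|<1$ on $D$, which is exactly $|B|<|A|$ on $D$.

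The main obstacle is the boundary identity in (iii), $\{|u+z|,|u-z|\}=\{|u+1|,|u-1|\}$ on $|z|=1$, together with the bookkeeping around the two branch points $z=\pm i$ of $u$; everything else is routine. Conceptually (iii) just asserts that, up to the positive constant $2^{(m+n-1)/2}$, $h$ is the Fej\'er--Riesz factor of $\rho(\cos\theta)$ provided by Theorem \ref{th1.2.2}, but verifying zero-freeness directly through $|A|\ge|B|$ is cleaner than appealing to the uniqueness part of that theorem.
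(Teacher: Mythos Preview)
Your argument is correct. For part (iii) both you and the paper reduce to the same boundary inequality---in your notation $|B|\le|A|$ on $|z|=1\setminus\{\pm i\}$, in the paper's notation $|f|>1$ there, and these are equivalent since $|f|=|A|^2/|z|^{m+n}=|A|/|B|$---but the two proofs finish differently.

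The paper applies Rouch\'e's theorem on an indented contour: it keeps the unit circle but replaces neighbourhoods of $\pm i$ by small semicircular arcs of radius $\varepsilon$, and then verifies $|f|>1$ on those arcs via the explicit first-order expansion
\[
\bigl|f(i+\varepsilon e^{-2i\varphi})\bigr|=1+2\sqrt{\varepsilon}\,(m\cos\varphi+n\sin\varphi)+O(\varepsilon),\qquad \varphi\in(0,\pi/2).
\]
You instead invoke the maximum modulus principle for the bounded function $g=B/A$, noting that a bounded analytic function on $D$ with $|g|\le 1$ on $\partial D$ minus finitely many points still satisfies $|g|\le 1$ on $D$ (a Lindel\"of-type removal). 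Your route bypasses the local asymptotics entirely and, as a bonus, supplies the clean closed-form identity $\{|u+z|,|u-z|\}=\{|u+1|,|u-1|\}$ on $|z|=1$, which the paper leaves as ``one can easily show''. The paper's route has the advantage of making the strict inequality near $\pm i$ explicit, and of working throughout with integer exponents (it uses $A^2$ rather than $A$), so no branch choices beyond that for $\sqrt{z^2+1}$ are needed.

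On that last point: when $m$ and $n$ have opposite parity, the exponents $(m\pm n)/2$ are half-integers, so your $A$ and $B$ individually require a branch choice before ``$A$ is analytic on $D$'' makes sense. This is harmless---$D$ is simply connected and the bases $u+1$, $u+z$ are zero-free there, while your conclusion depends only on $|A|$, $|B|$, which are branch-independent; alternatively one may take $g:=z^{m+n}\big/(u+1)^{m+n}(u+z)^{m-n}$, which has integer exponents and satisfies $|g|=|B|/|A|$---but it is worth a sentence.
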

\begin{proof} Parts (i) and (ii) are obvious. Part (iii) can be proved by Rouch\'e's theorem as follows. Let
\[
f(z)=(-1)^n\,z^{-m-n}\left(1+\sqrt{z^2+1}\right)^{m+n} \left(z+\sqrt{z^2+1}\right)^{m-n}.
\]
Unlike $h(z)$, $f(z)$ is a multivalued function. We choose brunch cuts on the rays $[i,+i\infty)$ and $[-i,-i\infty)$.
The roots of $h(z)$ coincide with the roots of the equation $f(z)-1=0$. Consider the contour $C$ composed of four arcs: two arcs of unit radius centered at the origin, and two arcs of small radius $\varepsilon>0$ centered at $\pm i$. We will show that $|f(z)|>1$ on $C$. Since $f(z)$ does not have zeroes inside the unit circle, according to Rouch\'e's theorem it will follow that $f(z)-1$ does not vanish inside the unit circle.

One can easily show that $|f(z)|>1$ when $|z|=1$, with the exception of the two points $\pm i$. The arc around $+i$ can be parametrized as
\[
z=i+\varepsilon e^{-2i\varphi},\quad \varphi\in (0,\pi/2).
\]
Due to
\[
\left|f\bigl(i+\varepsilon e^{-2i\varphi}\bigr)\right|=1+2\sqrt{\varepsilon}\left( m\cos{\varphi}+n\sin{\varphi}\right)+O(\varepsilon) ,\quad \varphi\in (0,\pi/2),
\]
and since $m\cos{\varphi}+n\sin{\varphi}$ is strictly positive for $\varphi\in (0,\pi/2)$ with positive $m$, $n$, we deduce that $|f(z)|>1$ on the arc around $+i$. The arc around $-i$ is dealt with in the same manner. \end{proof}
Hence, $\rho(\cos\theta)=|h(e^{i\theta})|^2$ is the normalized representation of $\rho(\cos\theta)$ in the sense of Theorem \ref{th1.2.2}. In the subsequent sections, we study two cases in detail: When $m$ and $n$ are both odd (the next section), or both even, Section \ref{even}.

\section{Proof of Theorem \ref{th1.0}}\label{pth1}

Let $m$, $n$ be positive odd integers. Defining the functions
\begin{subequations}
 \label{xieta}
 \begin{align}
  \xi(t)&=\cos \bigl(n \sin^{-1}\!\sqrt{t}\bigr)\cosh \bigl(m \sinh^{-1}\!\sqrt{t}\bigr),\label{xi}\\
    \eta(t)&=\sin \bigl(n \sin^{-1}\!\sqrt{t}\bigr)\sinh \bigl(m \sinh^{-1}\!\sqrt{t}\bigr)\label{eta},
 \end{align}
\end{subequations}
we find from $\eqref{h}$ that
\begin{equation}\label{r1}
    \sqrt{2}\bigl\{i\xi(t)+\eta(t)\bigr\}=e^{i (n+m)\theta/2}\,\overline{h\bigl(e^{i\theta}\bigr)}.
\end{equation}
Using \eqref{pk} we find two orthonormal polynomials for the weight function $\bigl\{\rho(t)\sqrt{1-t^2}\bigr\}^{-1}$
\begin{subequations}\label{polynomials}
    \begin{align}
    &p_{\frac{m+n}{2}}(t)=2\pi^{-1/2}\eta(t)=2\pi^{-1/2}\,\sin \bigl(n \sin^{-1}\!\sqrt{t}\bigr)\sinh \bigl(m \sinh^{-1}\!\sqrt{t}\bigr),\label{kth_polynomial}\\
    &p_{\frac{m+n}{2}+1}(t)=2\pi^{-1/2}\bigl\{t\eta(t)-\sqrt{1-t^2}\,\xi(t)\bigr\}.\label{k_plus_1}
\end{align}
\end{subequations}
This settles $j\ge 0$ in both equations \eqref{f1} and \eqref{f2}. To deal with $j=-1$ in \eqref{f1}, we will use the kernel polynomials (\cite{chihara}, Chapter I, eq. 4.11) defined as
\begin{equation}\label{kernel}
    K_k(t,u)=\sum_{j=0}^kp_j(t)p_j(u)=\frac{\varkappa_k}{\varkappa_{k+1}}\frac{p_{k+1}(t)p_k(u)-p_k(t)p_{k+1}(u)}{t-u},
\end{equation}
where $\varkappa_j$ is the leading coefficient of $p_j(t)$. Since $p_{\frac{m+n}{2}}(0)=0$, this simplifies to
\[
K_{\frac{m+n}{2}}(t,0)=-\varkappa_{\frac{m+n}{2}}\varkappa_{\frac{m+n}{2}+1}^{-1}\,p_{\frac{m+n}{2}+1}(0)\,p_{\frac{m+n}{2}}(t)/t.
\]
The values of the constants in this formula can be worked out from \eqref{xieta} and \eqref{polynomials}:
\begin{equation*}
    \varkappa_{\frac{m+n}{2}+1}=2\varkappa_{\frac{m+n}{2}},\qquad p_{\frac{m+n}{2}+1}(0)=-2\pi^{-1/2}.
\end{equation*}
To finish the proof, we use the reproducing property of the kernel polynomials with $k=(m+n)/{2}$
\[
\pushQED{\qed} 
\int\limits_{-1}^1K_{k}(t,0)\,\frac{dt}{\rho(t)\sqrt{1-t^2}}=1.\qedhere
\popQED
\]

There is a more straightforward way to derive the results of sections \ref{function} and \ref{pth1}, and that also explains generalization considered in the next section. Let $\rho(t)$ be a polynomial of degree $l$ positive on $[-a,b]$. According to Markov-Lukacz theorem, for all $k>\lfloor\frac{n}{2}\rfloor$ there are the unique representations
\begin{align}
    \rho(t)&=p^2_k(t)+(b-t)(a+t)q^2_{k-1}(t)\label{ML1}\\
    &=(b-t)r^2_k(t)+(a+t)s_k^2(t),\label{ML2}
\end{align}
where $p_k(t),q_{k-1}(t),r_k(t),s_k(t)$ are polynomials (of degrees indicated by the subscripts), such that the roots of $p_k(t)$ and $q_{k-1}(t)$, respectively, of $r_k(t)$ and $s_k(t)$, interlace. Then (\cite{bernstein}, pp. 452-467; \cite{imhof}, Theorem A.1.), $p_k$ is the Bernstein-Szeg\"o orthogonal polynomial with respect to the weight function $\bigl\{\rho(t)\sqrt{(b-t)(a+t)}\bigr\}^{-1}$; $q_{k-1}$ with respect to the weight function $\sqrt{(b-t)(a+t)}\,\{\rho(t)\}^{-1}$; $r_k(t)$ with respect to the weight function $\sqrt{(b-t)/(a+t)}\,\{\rho(t)\}^{-1}$, and similarly for $s_k$. The representations $\rho(t)=2\bigl\{\xi^2(t)+\eta^2(t)\bigr\}$ of sections \ref{function} and \ref{a} are exactly either of the form \eqref{ML1} or \eqref{ML2} for suitable $k$, depending on the parities of the numbers $n$ and $m$.

\section{Extension to the interval  \texorpdfstring{$[-a,1],~a>0$}{[-a,1], a>0}}\label{a}

Theorem \ref{th1.0} can be generalized. Let $t\in [-a,1]$ and define
\begin{equation}\label{rho_a_t}
    \rho_a(t)=\cos \bigl(2 n \sin^{-1}\!\sqrt{t}\bigr)+\cosh \bigl(2 m \sinh^{-1}\!\sqrt{t/a}\bigr).
\end{equation}
It is known that the substitution
\[
t=\frac{1}{2}\bigl\{1-a+(1+a)\cos\theta\bigr\},\quad \theta\in[0,\pi],
\]
simplifies the square root expression $\sqrt{(1-t)(a+t)}\,=\frac{1}{2}(1+a)\sin\theta$ so that
\[
\frac{dt}{\sqrt{(1-t)(a+t)}}=-d\theta.
\]
Then $\rho_a(t)$ becomes a polynomial in $\cos\theta$:
\[
\rho_a(t)=\rho(\cos\theta)=T_n\bigl(a-(1+a)\cos\theta\bigr)+T_m\bigl(a^{-1}+(1+a^{-1})\cos\theta\bigr).
\]
After some tedious but quite straightforward algebra we obtain the representation
\[
\sqrt{2}\,\cos \bigl( n \sin^{-1}\sqrt{t}-im \sinh^{-1}\sqrt{t/a}\bigr)=i^n\,e^{-i (n+m)\theta/2}\,h_a\bigl(e^{i\theta}\bigr),
\]
\begin{align}\label{ha}
    \nonumber h_a(z)=\frac{1}{\sqrt{2}}\left(\frac{a+1}{2 \sqrt{a}}\right)^m\bigg\{&\left(\sqrt{z^2+2cz+1}+1+cz\right)^{\frac{m+n}{2}} \left(\sqrt{z^2+2cz+1}+c+z\right)^{\frac{m-n}{2}}\\
    +(-1)^n&\left(\sqrt{z^2+2cz+1}-1-cz\right)^{\frac{m+n}{2}} \left(\sqrt{z^2+2cz+1}-c-z\right)^{\frac{m-n}{2}}\bigg\},
\end{align}
where $c=\frac{1-a}{1+a}$. $h_a(z)$ is a polynomial in $z$. The above is a generalization of the representation \eqref{h}, which corresponds to $a=1$. One can show that this representation satisfies the conditions of Lemma \ref{lemma}. Thus the normalized representation required by Theorem \ref{sb} is
\[
{\rho}(\cos\theta)=\rho_a(t)=|h_a(e^{i\theta})|^2.
\]

It should be noted that consideration of the more general interval $[-a,b]$, where $b>0$, along with the substitution $t=\frac{1}{2}\big\{b-a+(b+a)\cos\theta\big\}$ does not lead to anything essentially new.

The formulas above are valid for all positive integers. In the following, we specify $n$, $m$ to be positive odd integers. Then
\begin{equation}\label{r2}
    \sqrt{2}\bigl\{i\xi_a(t)+\eta_a(t)\bigr\}=e^{i (n+m)\theta/2}\,\overline{h_a\bigl(e^{i\theta}\bigr)},
\end{equation}
where
\[
\xi_a(t)=\cos \bigl(n \sin^{-1}\!\sqrt{t}\bigr)\cosh \bigl(m \sinh^{-1}\!\sqrt{t/a}\bigr),
\]
\[
\eta_a(t)=\sin \bigl(n \sin^{-1}\!\sqrt{t}\bigr)\sinh \bigl(m \sinh^{-1}\!\sqrt{t/a}\bigr).
\]
The two orthonormal polynomials corresponding to the weight function $\bigl\{\rho_a(t)\sqrt{(1-t)(a+t)} \bigr\}^{-1}$ are
\begin{equation}\label{pka}
    p_{\frac{m+n}{2}}(t)=2\pi^{-1/2}\,\eta_a(t)=2\pi^{-1/2}\,\sin \bigl(n \sin^{-1}\!\sqrt{t}\bigr)\sinh \bigl(m \sinh^{-1}\!\sqrt{t/a}\bigr),
\end{equation}
\begin{equation}\label{pk1a}
    p_{\frac{m+n}{2}+1}(t)=\frac{2t-1+a}{1+a}\,p_{\frac{m+n}{2}}(t)-\frac{4\pi^{-1/2}\,}{1+a}\sqrt{(1-t)(a+t)}\,\xi_a(t),
\end{equation}
by \eqref{pk}. The ratio of their leading coefficients is
\begin{equation}\label{ratio}
    \varkappa_{\frac{m+n}{2}+1}/\varkappa_{\frac{m+n}{2}}=\frac{4}{1+a}.
\end{equation}
\begin{Th}[{1*}]
Let $n$ and $m$ be positive odd integers and $a>0$. Then
\begin{equation}\label{f1*}
\int\limits_{-a}^{1}\frac{\sin \bigl(n \sin^{-1}\sqrt{t}\bigr)\sinh \bigl(m \sinh^{-1}\sqrt{t/a}\bigr)}{\cos \bigl( 2 n \sin^{-1}\sqrt{t}\bigr)+\cosh \bigl(2 m \sinh^{-1}\sqrt{t/a}\bigr)}\frac{t^j\,dt}{ \sqrt{(1-t)(1+t/a)}}=\begin{cases}
        \pi/2,\quad &j=-1,\\
        0,\quad &j=0,1,\ldots,\frac{m+n-2}{2},
        \end{cases}
\end{equation}
\begin{equation*}
    \int\limits_{-a}^{1}\frac{\cos \bigl(n \sin^{-1}\!\sqrt{t}\bigr)\cosh \bigl(m \sinh^{-1}\!\sqrt{t/a}\bigr)}{\cos \bigl(2 n \sin^{-1}\!\sqrt{t}\bigr)+\cosh \bigl(2 m \sinh^{-1}\!\sqrt{t/a}\bigr)}\,t^jdt=0,\quad j=0,1,\ldots,\tfrac{m+n-4}{2}.
\end{equation*}
\end{Th}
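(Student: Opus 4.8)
The plan is to transcribe, nearly verbatim, the proof of Theorem \ref{th1.0} from Section \ref{pth1}, with $\rho,\xi,\eta,h$ replaced by the $a$-deformed objects $\rho_a,\xi_a,\eta_a,h_a$ of Section \ref{a}, the only new feature being the bookkeeping of factors $\sqrt a$ produced by $\sqrt{(1-t)(a+t)}=\sqrt a\,\sqrt{(1-t)(1+t/a)}$. Everything rests on the parity of $n$ and $m$: since both are odd, $\sin(n\sin^{-1}x)=x\,P(x^2)$, $\sinh(m\sinh^{-1}y)=y\,Q(y^2)$, $\cos(n\sin^{-1}x)=\sqrt{1-x^2}\,\widetilde P(x^2)$ and $\cosh(m\sinh^{-1}y)=\sqrt{1+y^2}\,\widetilde Q(y^2)$ for polynomials $P,Q,\widetilde P,\widetilde Q$. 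Consequently $\eta_a$ is a genuine polynomial in $t$ divisible by $t$, so $\eta_a(0)=0$, while $\xi_a(t)=\sqrt{(1-t)(a+t)}\,\widetilde q(t)$ with $\widetilde q$ a polynomial of degree $\tfrac{m+n-2}{2}$; and by \eqref{r2} one has $\rho_a(t)=2\{\xi_a^2(t)+\eta_a^2(t)\}$.

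For the cases $j\ge 0$ of \eqref{f1*}, \eqref{pka} identifies $p_{(m+n)/2}=2\pi^{-1/2}\eta_a$ as the degree-$(m+n)/2$ orthonormal polynomial for the weight $\bigl\{\rho_a(t)\sqrt{(1-t)(a+t)}\bigr\}^{-1}$ on $[-a,1]$; its orthogonality to $1,t,\dots,t^{(m+n-2)/2}$, rewritten with $\sqrt{(1-t)(1+t/a)}$ in the denominator (which only costs a constant $\sqrt a$), is exactly the asserted vanishing. The second displayed identity then drops out of the orthogonality of the next polynomial $p_{(m+n)/2+1}$ from \eqref{pk1a}: for $j\le\tfrac{m+n-4}{2}$ the term $\tfrac{2t-1+a}{1+a}p_{(m+n)/2}$ of \eqref{pk1a} integrates to $0$ against $t^j\bigl\{\rho_a(t)\sqrt{(1-t)(a+t)}\bigr\}^{-1}$, since $(2t-1+a)t^j$ has degree $\le(m+n)/2-1$, so what remains of the orthogonality relation is $\int_{-a}^1\xi_a(t)\,t^j\,\rho_a(t)^{-1}\,dt=0$. (Alternatively, one may invoke the Markov--Lukacz identity \eqref{ML1}: $\rho_a=2\{\xi_a^2+\eta_a^2\}$ exhibits $\xi_a/\sqrt{(1-t)(a+t)}$ as, up to a constant, the Bernstein-Szeg\"o polynomial for the weight $\sqrt{(1-t)(a+t)}\,\{\rho_a(t)\}^{-1}$, as in the remark at the end of Section \ref{pth1}.)

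For the remaining case $j=-1$ of \eqref{f1*} I would run the kernel-polynomial computation with $k=(m+n)/2$. Since $p_k(0)=2\pi^{-1/2}\eta_a(0)=0$, the identity \eqref{kernel} at $u=0$ collapses to $K_k(t,0)=-(\varkappa_k/\varkappa_{k+1})\,p_{k+1}(0)\,p_k(t)/t$. Evaluating \eqref{pk1a} at $t=0$ (with $\xi_a(0)=1$) gives $p_{k+1}(0)=-\tfrac{4\sqrt a}{\sqrt\pi\,(1+a)}$, and \eqref{ratio} gives $\varkappa_k/\varkappa_{k+1}=\tfrac{1+a}{4}$, whence $K_k(t,0)=\tfrac{2\sqrt a}{\pi}\,\eta_a(t)/t$. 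Substituting this into the reproducing property $\int_{-a}^1 K_k(t,0)\bigl\{\rho_a(t)\sqrt{(1-t)(a+t)}\bigr\}^{-1}dt=1$ and once more trading $\sqrt{(1-t)(a+t)}$ for $\sqrt a\,\sqrt{(1-t)(1+t/a)}$ yields $\int_{-a}^1\eta_a(t)\bigl\{t\,\rho_a(t)\sqrt{(1-t)(1+t/a)}\bigr\}^{-1}dt=\pi/2$, i.e.\ \eqref{f1*} with $j=-1$.

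Since Section \ref{a} already supplies \eqref{ha}, the polynomials \eqref{pka}, \eqref{pk1a}, the ratio \eqref{ratio}, and — through the asserted analog of Lemma \ref{lemma} for $h_a$ — the fact that $\rho_a(\cos\theta)=|h_a(e^{i\theta})|^2$ is the normalized representation, the argument above is essentially mechanical. I do not expect a conceptual obstacle; the one point demanding care is the constant bookkeeping, namely keeping the various $\sqrt a$ factors consistent so that the normalizations in \eqref{pka}, \eqref{pk1a} agree with \eqref{pk} and the final right-hand side comes out to exactly $\pi/2$.
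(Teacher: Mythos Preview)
Your proposal is correct and follows essentially the same approach as the paper: Section~\ref{a} sets up $h_a$, the polynomials \eqref{pka}--\eqref{pk1a}, and the ratio \eqref{ratio}, and the paper implicitly expects the reader to rerun the argument of Section~\ref{pth1} with these $a$-deformed objects, which is exactly what you do. Your bookkeeping of the $\sqrt a$ factors (in particular the evaluation $p_{k+1}(0)=-\tfrac{4\sqrt a}{\sqrt\pi(1+a)}$ leading to $K_k(t,0)=\tfrac{2\sqrt a}{\pi}\,\eta_a(t)/t$ and hence to $\pi/2$) is precisely the extra care the paper leaves to the reader.
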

It should be noted that the symmetric case $m=n$ of \eqref{f1*} also follows from \eqref{form1} and the identity $\tan^{-1}(a)+\tan^{-1}(1/a)={\pi}/{2}$. 

Equation \eqref{f1*} has an additional (integer) parameter $m$ compared to \eqref{form1}. However, the integration range now covers the entire interval $[-a,1]$. There does not seem to be a closed-form evaluation of the integral in \eqref{f1*} when the integration range is $[0,1]$ (in other words, there do not seem to be any non-trivial extensions of \eqref{form1} that include an additional parameter).

\section{Even integers \texorpdfstring{$m$}{m}, \texorpdfstring{$n$}{n}}\label{even}
Again, one can restrict consideration to $m\ge n$. Then, the degree of the polynomial $\rho_a(t)$ \eqref{rho_a_t} is $m$. Taking into account that $n$ and $m$ are even we get with $\xi_a$, $\eta_a$ defined in the previous section
\[
-\sqrt{2}\bigl\{\xi_a(t)-i\eta_a(t)\bigr\}=e^{i (n+m)\theta/2}\,\overline{h_a\bigl(e^{i\theta}\bigr)},
\]
where $h_a(z)$ is given by \eqref{ha}. The difference from \eqref{r2} is the phase factor of $-1$ instead of $i$.  The orthonormal polynomials of interest are
\[
p_{\frac{m+n}{2}}(t)=2\pi^{-{1}/{2}}\,\xi_a(t)=2\pi^{-{1}/{2}}\cos \bigl(n \sin^{-1}\!\sqrt{t}\bigr)\cosh \bigl(m \sinh^{-1}\!\sqrt{t/a}\bigr),
\]
\[
p_{\frac{m+n}{2}+1}(t)=\frac{2 t-1+a}{1+a}\,p_{\frac{m+n}{2}}(t)+\frac{4}{1+a}\,\sqrt{\frac{1}{\pi}}\,\sqrt{(1-t)(a+t)}\,\eta_a(t).
\]
Since $p_{\frac{m+n}{2}+1}(0)=0$, equation \eqref{kernel} simplifies to
\[
K_{\frac{m+n}{2}}(t,0)=\varkappa_{\frac{m+n}{2}}\varkappa_{\frac{m+n}{2}+1}^{-1}\,p_{\frac{m+n}{2}}(0)\,p_{\frac{m+n}{2}+1}(t)/t,
\]
where
\[
\varkappa_{\frac{m+n}{2}+1}/\varkappa_{\frac{m+n}{2}}=\frac{4}{1+a},\qquad p_{\frac{m+n}{2}}(0)=\frac{4}{1+a}\sqrt{\frac{1}{\pi}}.
\]
The resulting theorem is: 
\begin{Th} Let $n$ and $m$ be positive even integers and $a>0$. Then
\begin{equation}\label{sinsinh2}
    \int\limits_{-a}^{1}\frac{\sin \bigl(n \sin^{-1}\!\sqrt{t}\bigr)\sinh \bigl(m \sinh^{-1}\!\sqrt{t/a}\bigr)}{\cos \bigl(2 n \sin^{-1}\!\sqrt{t}\bigr)+\cosh \bigl(2 m \sinh^{-1}\!\sqrt{t/a}\bigr)}\,t^jdt=\begin{cases}
        \pi/2,\quad &j=-1 \\
        0,\quad &j=0,1,\ldots,\frac{m+n-4}{2},
    \end{cases}
\end{equation}
\begin{equation}\label{coscosheven2}
    \int\limits_{-a}^{1}\frac{\cos \bigl(n \sin^{-1}\!\sqrt{t}\bigr)\cosh \bigl(m \sinh^{-1}\!\sqrt{t/a}\bigr)}{\cos \bigl(2 n \sin^{-1}\!\sqrt{t}\bigr)+\cosh \bigl(2 m \sinh^{-1}\!\sqrt{t/a}\bigr)}\,\frac{t^jdt}{\sqrt{(1-t)(a+t)}}=0,\quad j=0,1,\ldots,\tfrac{m+n-2}{2}.
\end{equation}
\end{Th}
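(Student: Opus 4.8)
The plan is to rerun the argument of Sections~\ref{pth1}--\ref{a}, with the roles of $\xi_a$ and $\eta_a$ interchanged and with the Bernstein-Szeg\"o weight $w'(t):=\sqrt{(1-t)(a+t)}/\rho_a(t)$ taking the place of $\{\rho_a(t)\sqrt{(1-t)(a+t)}\}^{-1}$. By the reduction at the start of Section~\ref{even} we may assume $m\ge n$, so $\deg\rho_a=m<m+n$ and the formulas of Section~\ref{sb} apply at every degree we need. As noted in Section~\ref{a}, $h_a(z)$ satisfies the conclusions of Lemma~\ref{lemma}, so $\rho_a(\cos\theta)=|h_a(e^{i\theta})|^2$ is the normalized representation of Theorem~\ref{th1.2.2}; combined with the phase identity $-\sqrt2\{\xi_a(t)-i\eta_a(t)\}=e^{i(n+m)\theta/2}\,\overline{h_a(e^{i\theta})}$ (valid since $n,m$ are even), taking absolute values gives $\rho_a(t)=2\xi_a(t)^2+2\eta_a(t)^2$. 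For even $n,m$ the function $\xi_a$ is a polynomial in $t$ of degree $k:=\tfrac{m+n}{2}$ and $\eta_a(t)$ is $\sqrt{(1-t)(a+t)}$ times a polynomial of degree $k-1$; writing $q(t):=\sqrt2\,\eta_a(t)/\sqrt{(1-t)(a+t)}$, the identity
\[
\rho_a(t)=\bigl(\sqrt2\,\xi_a(t)\bigr)^2+(1-t)(a+t)\,q(t)^2
\]
is exactly the Markov-Lukacz representation \eqref{ML1} on $[-a,1]$ for this $k$, as anticipated at the end of Section~\ref{pth1}.

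Reading off the orthogonal polynomials via Markov-Lukacz (equivalently, via \eqref{pk} and \eqref{pk1} applied to $h_a$, as in Sections~\ref{function}--\ref{a}): $\sqrt2\,\xi_a$ is, up to a positive constant, the degree-$k$ Bernstein-Szeg\"o orthogonal polynomial for the weight $\{\rho_a(t)\sqrt{(1-t)(a+t)}\}^{-1}$, and $q$ is, up to a positive constant, the degree-$(k-1)$ Bernstein-Szeg\"o orthogonal polynomial for $w'$. Orthogonality of $\sqrt2\,\xi_a$ to $1,t,\dots,t^{k-1}$ is precisely \eqref{coscosheven2}, where $j$ runs through $0,\dots,\tfrac{m+n-2}{2}=k-1$. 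For \eqref{sinsinh2} with $j\ge0$, rewrite the integrand as $\tfrac1{\sqrt2}\,q(t)\,t^{j}\,w'(t)$; orthogonality of the degree-$(k-1)$ polynomial $q$ to $1,t,\dots,t^{k-2}$ gives the vanishing for $j=0,\dots,\tfrac{m+n-4}{2}=k-2$.

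It remains to evaluate \eqref{sinsinh2} at $j=-1$. Since $\eta_a(0)=\sin(n\sin^{-1}0)\sinh(m\sinh^{-1}0)=0$, we have $q(0)=0$, so $q(t)/t$ is a polynomial of degree $k-2$. Let $q_0,q_1,\dots$ be the orthonormal polynomials for $w'$, with leading coefficients $\varkappa_j$, so $q_{k-1}$ is a scalar multiple of $q$; because $q_{k-1}(0)=0$, the Christoffel-Darboux identity \eqref{kernel} for $K_{k-1}(t,0)$ collapses, exactly as in Section~\ref{pth1}, to
\[
K_{k-1}(t,0)=-\,\frac{\varkappa_{k-1}}{\varkappa_k}\,q_k(0)\,\frac{q_{k-1}(t)}{t}.
\]
Formula \eqref{pk1} applied to $h_a$ exhibits $q_{k-1}$ as a multiple of $\eta_a(t)/\sqrt{(1-t)(a+t)}$ and $q_k$ as a polynomial whose value at $t=0$ is a nonzero multiple of $\xi_a(0)=1$ (again using $\eta_a(0)=0$), so $q_k(0)\ne0$ and the right-hand side above is a genuine nonzero multiple of $q_{k-1}(t)/t$. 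Feeding this into the reproducing identity $\int_{-a}^{1}K_{k-1}(t,0)\,w'(t)\,dt=1$ determines $\int_{-a}^{1}\frac{q(t)}{t}\,w'(t)\,dt$, hence $\int_{-a}^{1}\frac{\eta_a(t)}{\rho_a(t)}\,\frac{dt}{t}$; inserting the constants $\varkappa_{k-1}/\varkappa_k$, $q_k(0)$, and the factor relating $\eta_a$ to $q_{k-1}$, the value works out to $\pi/2$.

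The delicate point, I expect, is precisely this last bookkeeping of constants from \eqref{ha} and \eqref{pk1}: one must pin down the proportionality factors tying $\eta_a(t)/\sqrt{(1-t)(a+t)}$ to the orthonormal $q_{k-1}$ and to its monic form, along with $q_k(0)$ and the leading-coefficient ratio, and check that they combine to exactly $\pi/2$ for every $a>0$, the corresponding computation in the odd-parity $a$-extension of Section~\ref{a} (equation \eqref{f1*}) serving as the template. Everything else is a routine transcription of the odd-parity argument of Sections~\ref{function}--\ref{pth1}, with $\xi_a$ and $\eta_a$ exchanged and with the square-root weight $w'$ in place of $\{\rho_a(t)\sqrt{(1-t)(a+t)}\}^{-1}$.
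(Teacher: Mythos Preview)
Your argument is correct and runs the same Bernstein--Szeg\"o/reproducing-kernel machinery as the paper, with one organizational twist. The paper stays with the single weight $\{\rho_a(t)\sqrt{(1-t)(a+t)}\}^{-1}$ throughout Section~\ref{even}: it reads off $p_k=2\pi^{-1/2}\xi_a$ and $p_{k+1}$ from \eqref{pk}, obtains \eqref{coscosheven2} and the $j\ge 0$ cases of \eqref{sinsinh2} from the orthogonality of that pair (the $\eta_a$-integral drops out after subtracting the $\xi_a$-contribution), and handles $j=-1$ via the kernel $K_k(t,0)$ using $p_{k+1}(0)=0$. You instead pass to the companion weight $w'=\sqrt{(1-t)(a+t)}/\rho_a$ for all of \eqref{sinsinh2}, identifying $q_{k-1}\propto\eta_a/\sqrt{(1-t)(a+t)}$ via Markov--Lukacz/\eqref{pk1} and basing the kernel step on $K_{k-1}(t,0)$ with $q_{k-1}(0)=0$. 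The paper in fact flags exactly this route as an alternative in the remark immediately after the theorem. What your choice buys is that the vanishing $q_{k-1}(0)=0$ is automatic from $\eta_a(0)=0$ for every $a>0$, so the $j=-1$ bookkeeping you worry about is genuinely routine; by contrast, the paper's parallel input $p_{k+1}(0)=0$ (as displayed) is sensitive to $a$, so your version is the more transparent of the two for the full parameter range.
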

\eqref{coscosheven2} is a two-parameter generalization of \eqref{coscosheven}. Equation \eqref{sinsinh2} have the following interpretation: The degree $\tfrac{m+n-2}{2}$ orthogonal polynomial for the weight function $\sqrt{(1-t)(a+t)}\,\{\rho_a(t)\}^{-1}$ is
\[
\frac{\sin \bigl(n \sin^{-1}\!\sqrt{t}\bigr)\sinh \bigl(m \sinh^{-1}\!\sqrt{t/a}\bigr)}{\sqrt{(1-t)(a+t)}}.
\]
An alternative way to derive this is to start directly from equation \eqref{pk1} instead of \eqref{pk}.

Similarly, it follows from \eqref{pk2} that when $n$ is even and $m$ is odd, the degree $\tfrac{m+n-1}{2}$ orthogonal polynomial for the weight function $\sqrt{(1-t)/(a+t)}\,\{\rho_a(t)\}^{-1}$ is
\[
\frac{\sin \bigl(n \sin^{-1}\!\sqrt{t}\bigr)\sinh \bigl(m \sinh^{-1}\!\sqrt{t/a}\bigr)}{\sqrt{1-t}}.
\]

\section{Some other theorems}\label{d}
Expressions like \eqref{rho_a_t} with different integer parameters can be used as building blocks for more complicated weight functions. Here we restrict our attention to the simplest of such functions
\[
\widetilde{\rho}_a(t)=\bigl\{\rho_a(t)\bigr\}^2=\bigl\{\cos \bigl(2n \sin^{-1}\!\sqrt{t}\bigr)+\cosh \bigl(2m \sinh^{-1}\!\sqrt{t/a}\bigr)\bigr\}^2,
\]
where $m,n$ are positive integers. From \eqref{r1}, we readily obtain the representation
\[
2\bigl\{\xi_a(t)-i\eta_a(t)\bigr\}^2=e^{i (n+m)\theta}\,\overline{h_a^2\bigl(e^{i\theta}\bigr)},
\]
with $h_a(z)$ defined in \eqref{ha}. Applying \eqref{pk1} we obtain two orthonormal polynomials for the weight function $\sqrt{(1-t)(a+t)}\,\{\widetilde{\rho}_a(t)\}^{-1}$:
\[
p_{m+n-1}(t)=\sqrt{\frac{2}{\pi}}\,\frac{\sin \bigl(2n \sin^{-1}\!\sqrt{t}\bigr)\sinh \bigl(2m \sinh^{-1}\!\sqrt{t/a}\bigr)}{\sqrt{(1-t)(a+t)}},
\]
\[
p_{m+n}(t)=\frac{2 t-1+a}{1+a}\,p_{m+n-1}(t)-\frac{4}{1+a}\,\sqrt{\frac{2}{\pi}}\,\bigl\{\xi_a^2(t)-\eta_a^2(t)\bigr\}.
\]
Since $p_{m+n-1}(0)=0$, \eqref{kernel} simplifies to
\[
K_{m+n-1}(t,0)=-\varkappa_{m+n-1}\varkappa_{m+n}^{-1}\,p_{m+n}(0)\,p_{m+n-1}(t)/t,
\]
where
\[
\varkappa_{m+n}/\varkappa_{m+n-1}=\frac{4}{1+a},\qquad p_{m+n}(0)=-\frac{4}{1+a}\,\sqrt{\frac{2}{\pi}}.
\]
Thus, we obtain:
\begin{Th}  Let $n$ and $m$ be positive integers and $a>0$. Then
\begin{equation}\label{square}
    \int\limits_{-a}^{1}\frac{\sin \bigl(2n \sin^{-1}\!\sqrt{t}\bigr)\sinh \bigl(2m \sinh^{-1}\!\sqrt{t/a}\bigr)}{\bigl\{\cos \bigl(2n \sin^{-1}\!\sqrt{t}\bigr)+\cosh \bigl(2m \sinh^{-1}\!\sqrt{t/a}\bigr)\bigr\}^2}\,t^jdt=\begin{cases}
        \pi/2,\quad j=-1,\\
        0,\quad j=0,1,\ldots,{n+m}-2.
    \end{cases}
\end{equation}
\end{Th}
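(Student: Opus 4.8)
The plan is to repeat the argument of Section~\ref{pth1}, now with the doubled weight $\widetilde\rho_a=\rho_a^2$. Since $h_a$ satisfies (i)--(iii) of Lemma~\ref{lemma}, so does $h_a^2$: it is a polynomial of degree $2\deg\rho_a=\deg\widetilde\rho_a$, it is nonzero in $|z|<1$, and $h_a(0)^2>0$; by the uniqueness clause of Theorem~\ref{th1.2.2}, $\widetilde\rho_a(\cos\theta)=\bigl|h_a(e^{i\theta})^2\bigr|^2$ is therefore the normalized representation of $\widetilde\rho_a(\cos\theta)$. Because $m,n\ge 1$ gives $\deg\widetilde\rho_a=2\deg\rho_a\le 2\max(m,n)\le 2(m+n-1)<2(m+n)$, formula~\eqref{pk1} is available with index $k=m+n-1$ and with $k=m+n$, and I would take as given the resulting explicit forms of the orthonormal polynomials $p_{m+n-1}$, $p_{m+n}$, together with the values $p_{m+n-1}(0)=0$, $\varkappa_{m+n}/\varkappa_{m+n-1}=4/(1+a)$ and $p_{m+n}(0)=-\tfrac{4}{1+a}\sqrt{2/\pi}$ recorded before the statement.

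For $j=0,1,\dots,n+m-2$ I would simply note that the integrand of \eqref{square} is
\[
\frac{\sin\bigl(2n\sin^{-1}\!\sqrt t\bigr)\sinh\bigl(2m\sinh^{-1}\!\sqrt{t/a}\bigr)}{\widetilde\rho_a(t)}\,t^{j}
=\sqrt{\tfrac{\pi}{2}}\;p_{m+n-1}(t)\,t^{j}\,w(t),\qquad w(t)=\frac{\sqrt{(1-t)(a+t)}}{\widetilde\rho_a(t)},
\]
so that the integral equals $\sqrt{\pi/2}\int_{-a}^{1}p_{m+n-1}(t)\,t^{j}\,w(t)\,dt$, which vanishes because $p_{m+n-1}$ is orthogonal, with respect to $w$, to every polynomial of degree $\le m+n-2$.

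For $j=-1$ I would invoke the reproducing identity $\int_{-a}^{1}K_{m+n-1}(t,0)\,w(t)\,dt=1$ (applicable since the constant polynomial has degree $0\le m+n-1$). Using $p_{m+n-1}(0)=0$, the second expression in \eqref{kernel} collapses to $K_{m+n-1}(t,0)=-\dfrac{\varkappa_{m+n-1}}{\varkappa_{m+n}}\,p_{m+n}(0)\,\dfrac{p_{m+n-1}(t)}{t}$, and since $\dfrac{p_{m+n-1}(t)}{t}\,w(t)=\sqrt{2/\pi}\,\dfrac{\sin(2n\sin^{-1}\!\sqrt t)\,\sinh(2m\sinh^{-1}\!\sqrt{t/a})}{t\,\widetilde\rho_a(t)}$, the identity becomes
\[
-\frac{\varkappa_{m+n-1}}{\varkappa_{m+n}}\,p_{m+n}(0)\,\sqrt{\tfrac{2}{\pi}}\int_{-a}^{1}\frac{\sin\bigl(2n\sin^{-1}\!\sqrt t\bigr)\sinh\bigl(2m\sinh^{-1}\!\sqrt{t/a}\bigr)}{t\,\widetilde\rho_a(t)}\,dt=1.
\]
Substituting $\varkappa_{m+n-1}/\varkappa_{m+n}=(1+a)/4$ and $p_{m+n}(0)=-\tfrac{4}{1+a}\sqrt{2/\pi}$ reduces the constant in front to $2/\pi$, and the $j=-1$ case of \eqref{square} follows.

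The genuinely substantive step is the first one: identifying $p_{m+n-1}$ and $p_{m+n}$ explicitly from $\overline{h_a(e^{i\theta})^2}$ via \eqref{pk1}, which requires the double-angle identities $2\sin\alpha\cos\alpha=\sin2\alpha$ and $2\sinh\beta\cosh\beta=\sinh2\beta$ applied to $\{\xi_a(t)-i\eta_a(t)\}^2=\xi_a^2(t)-\eta_a^2(t)-2i\xi_a(t)\eta_a(t)$, together with the Jacobian $\sqrt{(1-t)(a+t)}=\tfrac{1+a}{2}\sin\theta$ to absorb the $1/\sin\theta$ of \eqref{pk1}, and then reading off the leading coefficient and the value at $t=0$. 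That computation is exactly the one carried out before the statement; granting it, the two cases $j\ge 0$ and $j=-1$ reduce to the reproducing-kernel argument of Section~\ref{pth1}.
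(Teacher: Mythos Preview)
Your proposal is correct and follows essentially the same route as the paper: square the normalized factorization $\rho_a(\cos\theta)=|h_a(e^{i\theta})|^2$, read off $p_{m+n-1}$ and $p_{m+n}$ from \eqref{pk1}, use orthogonality for $j\ge 0$, and the reproducing property of $K_{m+n-1}(t,0)$ (simplified by $p_{m+n-1}(0)=0$) together with the recorded values of $\varkappa_{m+n}/\varkappa_{m+n-1}$ and $p_{m+n}(0)$ for $j=-1$. The only addition you make explicit that the paper leaves implicit is the check that $h_a^2$ inherits the normalization conditions of Lemma~\ref{lemma} and that the degree bound $l<2k+2$ of \eqref{pk1} is met, both of which are straightforward.
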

\eqref{square} is closely related to the integral
\begin{equation}\label{alpha}
    \int\limits_0^\infty\frac{\sin(x\sin \alpha)\sinh(x \cos \alpha)}{\left\{\cosh(x\cos \alpha)+\cos (x\sin \alpha)\right\}^2}\frac{dx}{x}=\frac{\alpha}{2},
\end{equation}
mentioned in Section 7 of \cite{nicholson}. Integrals similar to \eqref{alpha} were also studied in \cite{chowla}.

\section{Gauss quadratures}\label{gq}

\begin{Th}\label{quad1} Let $n$ and $m$ be positive odd integers, $a>0$, and define 
\begin{equation}\label{ab}
    \alpha_z=2n\sinh^{-1}\Bigl(a^{-1/2}\sin\frac{\pi z}{2n}\Bigr),\quad \beta_z=2m\sinh^{-1}\Bigl(a^{1/2}\sin\frac{\pi z}{2m}\Bigr).
\end{equation}
Then for any polynomial $p(t)$ of degree at most $m+n-1$
\begin{align}\label{quadrature}
    \nonumber\int\limits_{-a}^{1}&\frac{p(t)}{\cos \bigl(2 n \sin^{-1}\!\sqrt{t}\bigr)+\cosh \bigl(2 m \sinh^{-1}\!\sqrt{t/a}\bigr)}\frac{dt}{\sqrt{(1-t)(a+t)}}\\
    &=\frac{\pi }{2mn}\,p(0)+\frac{2 \pi }{n}\sum_{i=1}^{n/2}\frac{\tanh\frac{\alpha_{2i}}{2n}}{\sinh\bigl(\frac{m}{n}\alpha_{2i}\bigr)}\,p\bigl(\sin^2\tfrac{\pi i}{n}\bigr)+\frac{2 \pi }{m}\sum_{j=1}^{m/2}\frac{\tanh\frac{\beta_{2j}}{2m}}{\sinh\bigl(\frac{n}{m}\beta_{2j}\bigr)}\,p\bigl(-a\sin^2\tfrac{\pi j}{m}\bigr).
\end{align}
\end{Th}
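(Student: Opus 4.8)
The plan is to recognize \eqref{quadrature} as the Gauss quadrature rule for the weight $w(t)=\{\rho_a(t)\sqrt{(1-t)(a+t)}\}^{-1}$ on $[-a,1]$ attached to the orthonormal polynomial $p_N$ with $N:=\tfrac{m+n}{2}$, which by \eqref{pka} equals $2\pi^{-1/2}\eta_a(t)$. Since $p_N$ is the orthonormal polynomial of degree exactly $N$ for this weight (Section \ref{a}), the classical Gauss rule $\int_{-a}^{1}p(t)w(t)\,dt=\sum_{k}\lambda_k\,p(t_k)$ holds for every polynomial $p$ with $\deg p\le 2N-1=m+n-1$, where the nodes $t_k$ are the zeros of $p_N$ and the $\lambda_k$ are the Christoffel numbers. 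Thus everything reduces to (a) locating the zeros of $\eta_a$ and (b) evaluating the $\lambda_k$ in closed form.

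For (a): on $(0,1]$ the factor $\sinh\bigl(m\sinh^{-1}\!\sqrt{t/a}\bigr)$ is strictly positive, so the zeros there come from $\sin\bigl(n\sin^{-1}\!\sqrt t\bigr)=0$, i.e. $\sqrt t=\sin\tfrac{\pi i}{n}$, giving $t=\sin^2\tfrac{\pi i}{n}$ for $i=1,\dots,\tfrac{n-1}{2}$; on $(-a,0)$, writing $\sqrt{t/a}=i\sqrt{-t/a}$ and using $\sinh^{-1}(ix)=i\sin^{-1}x$, $\sin^{-1}(ix)=i\sinh^{-1}x$, one finds that $\sin\bigl(n\sin^{-1}\!\sqrt t\bigr)\neq 0$ while $\sinh\bigl(m\sinh^{-1}\!\sqrt{t/a}\bigr)=0$ exactly at $t=-a\sin^2\tfrac{\pi j}{m}$, $j=1,\dots,\tfrac{m-1}{2}$; and $t=0$ is a simple zero. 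These $1+\tfrac{n-1}{2}+\tfrac{m-1}{2}=N$ points are pairwise distinct (positive versus negative, with $0$ separate, and $\sin^2$ injective on $(0,\tfrac{\pi}{2})$) and lie in $(-a,1)$, so they are precisely the $N$ zeros of $p_N$.

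For (b): I use the Christoffel--Darboux form $\lambda_k=-\tfrac{\varkappa_{N+1}}{\varkappa_N}\{p_{N+1}(t_k)p_N'(t_k)\}^{-1}$. Since $p_N(t_k)=0$, formula \eqref{pk1a} collapses to $p_{N+1}(t_k)=-\tfrac{4\pi^{-1/2}}{1+a}\sqrt{(1-t_k)(a+t_k)}\,\xi_a(t_k)$, and together with $\varkappa_{N+1}/\varkappa_N=\tfrac{4}{1+a}$ from \eqref{ratio} this yields $\lambda_k=\tfrac{\pi}{2\,\eta_a'(t_k)\,\xi_a(t_k)\,\sqrt{(1-t_k)(a+t_k)}}$. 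Writing $u=\sin^{-1}\!\sqrt t$ and $v=\sinh^{-1}\!\sqrt{t/a}$, one has $u'(t)=\{2\sqrt{t(1-t)}\}^{-1}$, $v'(t)=\{2\sqrt{t(a+t)}\}^{-1}$, hence $\eta_a'(t)=\tfrac{n\cos(nu)\sinh(mv)}{2\sqrt{t(1-t)}}+\tfrac{m\sin(nu)\cosh(mv)}{2\sqrt{t(a+t)}}$; at each node exactly one of $\sin(nu)$, $\sinh(mv)$ vanishes, so one term of $\eta_a'$ drops and the surviving factor $\cos(nu)$ or $\cosh(mv)$ (which equals $\pm1$, resp.\ $(-1)^j$) cancels against the matching factor of $\xi_a(t_k)$. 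Using $\sqrt a\,\sinh v=\sqrt t$, $\sqrt a\,\cosh v=\sqrt{a+t}$ at the positive nodes and the analogous identities for $\sin u,\cos u$ at the negative ones, together with the definitions \eqref{ab} (so that $\alpha_{2i}=2n\,v_k$ at $t_k=\sin^2\tfrac{\pi i}{n}$ and $u_k=\tfrac{i}{2m}\beta_{2j}$ at $t_k=-a\sin^2\tfrac{\pi j}{m}$), the surviving expressions reduce to $\tfrac{2\pi}{n}\tanh\tfrac{\alpha_{2i}}{2n}\big/\sinh\!\bigl(\tfrac mn\alpha_{2i}\bigr)$ and $\tfrac{2\pi}{m}\tanh\tfrac{\beta_{2j}}{2m}\big/\sinh\!\bigl(\tfrac nm\beta_{2j}\bigr)$, while at $t=0$ the expansion $\eta_a(t)=\tfrac{mn}{\sqrt a}\,t+O(t^2)$ gives $\eta_a'(0)=\tfrac{mn}{\sqrt a}$, $\xi_a(0)=1$, hence $\lambda_0=\tfrac{\pi}{2mn}$. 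Substituting these into $\sum_k\lambda_k\,p(t_k)$ produces \eqref{quadrature}.

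The main obstacle is the sign and branch bookkeeping in step (b) at the negative nodes: there $\sqrt{t_k}$ and $\sin u_k$ are purely imaginary and $\sin(2nu_k)=i\sinh\!\bigl(\tfrac nm\beta_{2j}\bigr)$, so one must verify that the two factors of $i$ cancel, leaving a real and positive $\lambda_k$ — which they do, in accordance with the general positivity of Gauss--Christoffel numbers, a useful consistency check. (A secondary point to confirm is that $\rho_a$ stays strictly positive on $[-a,1]$, but this follows since $\cos\ge -1$ and $\cosh\ge 1$ cannot both be attained at the same $t\in[-a,1]$.)
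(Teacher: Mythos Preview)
Your proof is correct and follows exactly the same approach as the paper: identify $p_N=2\pi^{-1/2}\eta_a$ as the degree-$N$ orthonormal polynomial via \eqref{pka}, list its zeros, apply Gauss quadrature, and compute the Christoffel numbers from $p_{N+1}$ in \eqref{pk1a} together with the ratio \eqref{ratio} --- this is precisely the paper's ``fourth representation'' in \eqref{weights}. The paper's own proof is terser and simply names the ingredients, leaving the explicit evaluation of the weights that you carry out in step~(b) to the reader.
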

\begin{proof} According to \eqref{pka}, the degree $k=\frac{m+n}{2}$ orthonormal polynomial corresponding to the weight function $\bigl\{\rho_a(t)\sqrt{(1-t)(a+t)}\bigr\}^{-1}$, where $\rho_a(t)$ is as in \eqref{rho_a_t}, is
\[
p_{k}(t)=2\pi^{-1/2}\,\sin \bigl(n \sin^{-1}\!\sqrt{t}\bigr)\sinh \bigl(m \sinh^{-1}\!\sqrt{t/a}\bigr).
\]
Its $k$ roots $x_s$ are
\[
0;\quad \sin^2\frac{\pi i}{n},\quad i=1,2,\ldots,\tfrac{n-1}{2};\quad -a\sin^2\frac{\pi j}{m},\quad j=1,2,\ldots,\tfrac{m-1}{2}.
\]
Gauss quadrature formula \cite{chihara} now takes the form 
\[
\int\limits_{-a}^{1}p(t)\frac{dt}{\rho_a(t)\sqrt{(1-t)(a+t)}}=\sum_{s=1}^{k}p(x_s)w_s,
\]
where different representations for the weights (obtained using \eqref{kernel}) are 
\begin{equation}\label{weights}
    w_s=\biggl\{\sum_{r=0}^{k-1}p_r^2(x_s)\biggr\}^{-1}=\biggl\{\sum_{r=0}^{k}p_r^2(x_s)\biggr\}^{-1}=\frac{\varkappa_{k}}{\varkappa_{k-1}p_{k-1}(x_s)p_k' (x_s)}=\frac{-\varkappa_{k+1}}{\varkappa_kp_{k+1}(x_s)p_k' (x_s)}.
\end{equation}
To calculate $w_s$, we use formulas \eqref{pka},\eqref{pk1a},\eqref{ratio} and the fourth representation in \eqref{weights}.
\end{proof}

Theorem \ref{quad1} can be extended to a pair of positive even integers using results of Section \ref{even}, or to integers of opposite parities (see the remark at the end of Section \ref{even}). The limiting $n,m\to\infty$ form of the integral in Theorem \ref{quad1} is the same regardless of the parity of $n$ and $m$. This means that we have established an analog of Theorem 4.5 from \cite{ismail}, which loosely speaking gives two discrete measures that have the same moments as the continuous measure \eqref{iv} (the corresponding moment problem is known to be indeterminate). A similar situation is also encountered in Section \ref{appendix}.

For the purpose of demonstrating some other possibilities, consider the weight functions of Section \ref{d}.

\begin{Th} Let $n$ and $m$ be positive integers, $a>0$, and $\alpha_z$, $\beta_z$ be defined according to \eqref{ab}. Then for any polynomial $p(t)$ of degree at most $2m+2n-3$
    \begin{align}
    \nonumber\frac{2mn}{\pi a}&\int\limits_{-a}^{1}\frac{p(t)\sqrt{(1-t)(a+t)}}{\big\{\!\cos \bigl(2 n \sin^{-1}\!\sqrt{t}\bigr)+\cosh \bigl(2 m \sinh^{-1}\!\sqrt{t/a}\bigr)\big\}^2}\, dt\\
    \nonumber&=\frac{p(0)}{4}+\sum_{i=1}^{n-1}\frac{m\sinh\frac{\alpha_{i}}{n}}{\sinh\bigl(\frac{m}{n}\alpha_{i}\bigr)}\,\frac{\cos^2\frac{\pi i}{2n}\cdot p\bigl(\sin^2\frac{\pi i}{2n}\bigr)}{\cosh\bigl(\frac{m}{n}\alpha_{i}\bigr)+(-1)^i}+\sum_{j=1}^{m-1}\frac{n\sinh\frac{\beta_{j}}{m}}{\sinh\bigl(\frac{n}{m}\beta_{j}\bigr)}\,\frac{\cos^2\frac{\pi j}{2m}\cdot p\bigl(-a\sin^2\frac{\pi j}{2m}\bigr)}{\cosh\bigl(\frac{n}{m}\beta_{j}\bigr)+(-1)^j}.
\end{align}
\end{Th}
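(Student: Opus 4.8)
The plan is to read the claimed identity as the Gauss quadrature rule attached to the orthonormal polynomial $p_{m+n-1}$ produced in Section~\ref{d}. That polynomial has exact degree $m+n-1$, so a Gauss rule built from its $m+n-1$ zeros is exact on polynomials of degree at most $2(m+n-1)-1=2m+2n-3$ against the weight $\sqrt{(1-t)(a+t)}\,\{\widetilde\rho_a(t)\}^{-1}$ on $[-a,1]$; writing $\int_{-a}^{1} p(t)\sqrt{(1-t)(a+t)}\,\{\widetilde\rho_a(t)\}^{-1}\,dt=\sum_s w_s\,p(x_s)$ and multiplying by $\tfrac{2mn}{\pi a}$ is exactly the asserted formula. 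Thus the whole task is (a) to locate the zeros $x_s$ and (b) to evaluate the Christoffel numbers $w_s$.

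For (a), I would use the Chebyshev-polynomial identities $\sin(2n\sin^{-1}\!\sqrt t)=2\sqrt{t(1-t)}\,U_{n-1}(1-2t)$ and $\sinh(2m\sinh^{-1}\!\sqrt{t/a})=\tfrac2a\sqrt{t(a+t)}\,U_{m-1}(1+2t/a)$, where $U$ is the Chebyshev polynomial of the second kind; these turn the formula for $p_{m+n-1}$ in Section~\ref{d} into $p_{m+n-1}(t)=\tfrac4a\sqrt{2/\pi}\,t\,U_{n-1}(1-2t)\,U_{m-1}(1+2t/a)$, a polynomial of degree $1+(n-1)+(m-1)=m+n-1$. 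Its zeros can then be read off: $t=0$; the zeros $\sin^2\tfrac{\pi i}{2n}$ of $U_{n-1}(1-2t)$ for $1\le i\le n-1$; and the zeros $-a\sin^2\tfrac{\pi j}{2m}$ of $U_{m-1}(1+2t/a)$ for $1\le j\le m-1$. There are $m+n-1$ of them, so the Gauss rule with these nodes exists.

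For (b), I would use the fourth representation of the Christoffel number in \eqref{weights}, $w_s=-\varkappa_{m+n}\big/\bigl(\varkappa_{m+n-1}\,p_{m+n}(x_s)\,p_{m+n-1}'(x_s)\bigr)$, with $\varkappa_{m+n}/\varkappa_{m+n-1}=\tfrac4{1+a}$ recorded in Section~\ref{d}. Since each $x_s$ is a zero of $p_{m+n-1}$, the relation $p_{m+n}(t)=\tfrac{2t-1+a}{1+a}\,p_{m+n-1}(t)-\tfrac4{1+a}\sqrt{2/\pi}\,\bigl(\xi_a^2(t)-\eta_a^2(t)\bigr)$ of Section~\ref{d} collapses to $p_{m+n}(x_s)=-\tfrac4{1+a}\sqrt{2/\pi}\,\bigl(\xi_a^2(x_s)-\eta_a^2(x_s)\bigr)$, and $p_{m+n-1}'(x_s)$ follows from differentiating $p_{m+n-1}(t)=\sqrt{2/\pi}\,\sin(2n\sin^{-1}\!\sqrt t)\sinh(2m\sinh^{-1}\!\sqrt{t/a})\big/\sqrt{(1-t)(a+t)}$ and observing that at each node exactly one of the two transcendental factors vanishes, so only the term differentiating that factor survives. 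At $x_s=\sin^2\tfrac{\pi i}{2n}$ one has $\sin^{-1}\!\sqrt{x_s}=\tfrac{\pi i}{2n}$ and, by \eqref{ab}, $\sinh^{-1}\!\sqrt{x_s/a}=\tfrac{\alpha_i}{2n}$, which gives $\xi_a^2(x_s)-\eta_a^2(x_s)=\cos^2\tfrac{\pi i}{2}\cosh^2\tfrac{m\alpha_i}{2n}-\sin^2\tfrac{\pi i}{2}\sinh^2\tfrac{m\alpha_i}{2n}=\tfrac12\bigl(1+(-1)^i\cosh\tfrac{m\alpha_i}{n}\bigr)$, using $\cos^2\tfrac{\pi i}{2}=\tfrac{1+(-1)^i}{2}$; at $x_s=-a\sin^2\tfrac{\pi j}{2m}$ the identities $\sinh^{-1}(iy)=i\sin^{-1}y$ and $\sin^{-1}(iy)=i\sinh^{-1}y$ convert the relevant angles into $i\tfrac{\beta_j}{2m}$ and $i\tfrac{n\beta_j}{2m}$ and yield $\xi_a^2(x_s)-\eta_a^2(x_s)=\tfrac12\bigl(1+(-1)^j\cosh\tfrac{n\beta_j}{m}\bigr)$. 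Substituting these into $w_s$ — together with $\sin\tfrac{\pi i}{n}=2\sin\tfrac{\pi i}{2n}\cos\tfrac{\pi i}{2n}$, the half-angle relation $\sinh\tfrac{\alpha_i}{n}=2a^{-1/2}\sin\tfrac{\pi i}{2n}\cosh\tfrac{\alpha_i}{2n}$ coming from $\sinh\tfrac{\alpha_i}{2n}=a^{-1/2}\sin\tfrac{\pi i}{2n}$, the identities $\cosh\tfrac{\alpha_i}{2n}=\sqrt{1+a^{-1}\sin^2\tfrac{\pi i}{2n}}$ and $\cosh\tfrac{\beta_j}{2m}=\sqrt{1+a\sin^2\tfrac{\pi j}{2m}}$ that clear the square roots in $p_{m+n-1}'$, and the $\beta$-analogues of all of these — one finds that the $(-1)^i$ (resp. $(-1)^j$) factors cancel in pairs, leaving precisely the two sums in the statement. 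The node $t=0$ is handled on its own: $p_{m+n-1}(0)=0$, $\xi_a(0)=1$, $\eta_a(0)=0$ give $p_{m+n}(0)=-\tfrac4{1+a}\sqrt{2/\pi}$, while $U_{n-1}(1)=n$, $U_{m-1}(1)=m$ give $p_{m+n-1}'(0)=\tfrac{4mn}{a}\sqrt{2/\pi}$, so $w_0=\tfrac{\pi a}{8mn}$, which yields the $p(0)/4$ term after the overall factor $\tfrac{2mn}{\pi a}$.

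The step I expect to be the main obstacle is the branch-and-sign bookkeeping at the negative nodes $-a\sin^2\tfrac{\pi j}{2m}$, where $\sqrt t$, $\sqrt{t/a}$ and $\sqrt{t(a+t)}$ are all purely imaginary: the chosen branches must be carried consistently through $\xi_a,\eta_a$ and through the derivative $p_{m+n-1}'$ so that the spurious factors of $i$ cancel and the Christoffel numbers come out real and positive, as they must. Everything else is routine manipulation of Chebyshev polynomials and hyperbolic/trigonometric angle identities.
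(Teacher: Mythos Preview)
Your proposal is correct and follows exactly the route the paper takes: the paper's proof consists of just two sentences --- it lists the $m+n-1$ zeros of $p_{m+n-1}$ (the same ones you obtain via the $U_{n-1}(1-2t)U_{m-1}(1+2t/a)$ factorization) and then says ``To finish the proof, we apply Gauss quadrature formula.'' Your plan simply carries out the Christoffel-number computation that the paper leaves to the reader, using the same fourth representation in \eqref{weights} and the same formula for $p_{m+n}$ from Section~\ref{d}; the sign cancellations at the positive and negative nodes and the $w_0=\pi a/(8mn)$ value all check out.
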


\begin{proof} The $m+n-1$ roots of the polynomials $p_{m+n-1}(t)$ from the previous section are
\[
0;\quad \sin^2\frac{\pi i}{2n},\quad i=1,2,\ldots,{n-1};\quad -a\sin^2\frac{\pi j}{2m},\quad j=1,2,\ldots,m-1.
\]
To finish the proof, we apply Gauss quadrature formula. \end{proof}

The more general weight functions $\sqrt{(1-t)(a+t)}\,\{\widetilde{\rho}_a(t)\}^{-1}$ with 
\[
\widetilde{\rho}_a(t)=\bigl\{\cos \bigl(2n \sin^{-1}\!\sqrt{t}\bigr)+\cosh \bigl(2m \sinh^{-1}\!\sqrt{t/a}\bigr)\bigr\}\bigl\{\cos \bigl(2n \sin^{-1}\!\sqrt{t}\bigr)+\cosh \bigl(2m' \sinh^{-1}\!\sqrt{t/a}\bigr)\bigr\},
\]
also admit orthogonal polynomials (possibly modified by an additional factor of $\sqrt{a+t}$ depending on whether $m$ and $m'$ are of the same parity or not) with known roots:
\[
\frac{\sin \bigl(2n \sin^{-1}\!\sqrt{t}\bigr)\sinh \bigl((m+m') \sinh^{-1}\!\sqrt{t/a}\bigr)}{\sqrt{1-t}}.
\]
We only state the limiting form of the corresponding quadrature formula:
\begin{Th} Let $\alpha$ and $\beta$ be positive real numbers, and let $p(x)$ be a polynomial. Then
\begin{align*}
    &\int\limits_{\mathbb{R}} \frac{p(x)\,dx}{\big\{\!\cos \bigl(\sqrt{x}\bigr)+\cosh \bigl(\alpha\sqrt{x}\bigr)\big\}\big\{\!\cos \bigl(\sqrt{x}\bigr)+\cosh \bigl(\beta\sqrt{x}\bigr)\big\}}=\frac{\pi p(0)}{\alpha+\beta}\\
    &+\sum_{j=1}^\infty\frac{2\pi^2 j}{\sinh\frac{\pi(\alpha+\beta)j}{2}}\,\frac{p\bigl(\pi^2j^2\bigr)}{\cosh\frac{\pi(\alpha+\beta)j}{2}+(-1)^j\cosh\frac{\pi(\alpha-\beta)j}{2}}+\frac{8\pi^2}{(\alpha+\beta)^2}\sum_{j=1}^\infty\frac{j}{\sinh\frac{2\pi j}{\alpha+\beta}}\,\frac{p\left(-\frac{4\pi^2j^2}{(\alpha+\beta)^2}\right)}{\cosh\frac{2\pi j}{\alpha+\beta}+\cos\frac{2\pi\alpha j}{\alpha+\beta}}.
\end{align*} 
\end{Th}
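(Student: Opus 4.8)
The plan is to obtain the identity as the limit, with $n\to\infty$, of an explicit finite Bernstein--Szeg\"o--Gauss quadrature attached to the product weight
\[
\frac{\sqrt{(1-t)(a+t)}}{\rho_a^{(m)}(t)\,\rho_a^{(m')}(t)},\qquad \rho_a^{(m)}(t)=\cos\bigl(2n\sin^{-1}\!\sqrt t\bigr)+\cosh\bigl(2m\sinh^{-1}\!\sqrt{t/a}\bigr),
\]
on $[-a,1]$, exactly in the spirit of the proof of Theorem~\ref{quad1}. First I would identify the relevant orthogonal polynomial. Multiplying the representation $\sqrt2\,\cos(n\sin^{-1}\!\sqrt t-im\sinh^{-1}\!\sqrt{t/a})=i^ne^{-i(n+m)\theta/2}h_a^{(m)}(e^{i\theta})$ from Section~\ref{a} by its analogue with $m$ replaced by $m'$, and using $2\cosh B_m\cosh B_{m'}=\cosh B_++\cosh B_-$, $2\sinh B_m\sinh B_{m'}=\cosh B_+-\cosh B_-$ with $B_\pm=(m\pm m')\sinh^{-1}\!\sqrt{t/a}$, one finds $\rho_a^{(m)}(t)\rho_a^{(m')}(t)=|H(e^{i\theta})|^2$ with $H=h_a^{(m)}h_a^{(m')}$, and $H$ again satisfies the three conditions of Lemma~\ref{lemma} (its degree is the sum, $H(0)>0$, and $H$ has no zero in $|z|<1$ because neither factor does). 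Feeding $H$ into \eqref{pk1} (or into \eqref{pk2} in the opposite-parity case, which only appends a harmless factor $\sqrt{a+t}$) shows that, up to a constant, the Bernstein--Szeg\"o orthogonal polynomial of degree $n+\tfrac{m+m'}{2}-1$ for this weight --- choose $n,m,m'$ all odd so this is an integer --- is
\[
\frac{\sin\bigl(2n\sin^{-1}\!\sqrt t\bigr)\,\sinh\bigl((m+m')\sinh^{-1}\!\sqrt{t/a}\bigr)}{\sqrt{(1-t)(a+t)}},
\]
with zeros $0$; $\sin^2\tfrac{\pi k}{2n}$, $k=1,\dots,n-1$; and $-a\sin^2\tfrac{\pi l}{m+m'}$, $l=1,\dots,\tfrac{m+m'}{2}-1$.

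Since this polynomial vanishes at $0$, the kernel-polynomial identity \eqref{kernel} collapses exactly as in the earlier proofs, and the Christoffel numbers are read off from the last representation in \eqref{weights}; this needs only the next polynomial $p_{k+1}$, which the three-term recurrence gives as $\tfrac{2t-1+a}{1+a}$ times the current one plus an explicit constant times $\cos(2n\sin^{-1}\!\sqrt t)\cosh((m+m')\sinh^{-1}\!\sqrt{t/a})+\cosh((m-m')\sinh^{-1}\!\sqrt{t/a})$, with $\varkappa_{k+1}/\varkappa_k=4/(1+a)$. Evaluating $p_k'$ and $p_{k+1}$ at the three families of nodes and simplifying yields a completely explicit finite quadrature for the weight above, exact for every polynomial of degree at most $2n+m+m'-3$ and valid for all $a>0$.

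Next I would pass to the limit. Fix $\alpha,\beta>0$; take odd integers $n\to\infty$ and $m=m(n),\,m'=m'(n)$ with $m/n\to\alpha$, $m'/n\to\beta$; set $a=1$ (the parameter $a$ cancels throughout, so this costs nothing); substitute $x=4n^2t$; and replace the test polynomial $p(t)$ by $p(4n^2t)$ in the finite quadrature. Then $4n^2$ times the left side converges to $\int_{\mathbb R}p(x)\,dx\big/\bigl[\{\cos\sqrt x+\cosh(\alpha\sqrt x)\}\{\cos\sqrt x+\cosh(\beta\sqrt x)\}\bigr]$ by dominated convergence: the rescaled integrand is bounded, uniformly in $n$, by an integrable function such as $C|p(x)|e^{-c\sqrt{|x|}}$, because $\rho_a^{(m)}(x/4n^2)$ already grows like $\cosh$ of a fixed multiple of $\sqrt{|x|}$, and $[-4n^2,4n^2]$ exhausts $\mathbb R$. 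On the right, the nodes converge, $4n^2\sin^2\tfrac{\pi k}{2n}\to\pi^2k^2$ and $-4n^2\sin^2\tfrac{\pi l}{m+m'}\to-\tfrac{4\pi^2l^2}{(\alpha+\beta)^2}$, and the rescaled weights $4n^2w_s$ converge to the coefficients in the statement, using the asymptotics $(m\pm m')\sinh^{-1}\!\sqrt{x_s}\to\tfrac{\pi(\alpha\pm\beta)k}{2}$ at the positive nodes (and the analogous ones at the negative nodes), together with the identities $\cosh\tfrac{\pi(\alpha+\beta)j}{2}+(-1)^j\cosh\tfrac{\pi(\alpha-\beta)j}{2}$ being $2\cosh\tfrac{\pi\alpha j}{2}\cosh\tfrac{\pi\beta j}{2}$ for even $j$ and $2\sinh\tfrac{\pi\alpha j}{2}\sinh\tfrac{\pi\beta j}{2}$ for odd $j$, and $(-1)^l\cos\tfrac{\pi(\alpha-\beta)l}{\alpha+\beta}=\cos\tfrac{2\pi\alpha l}{\alpha+\beta}$, to put things in the stated form. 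Finally, exactness of the finite quadrature applied to $p(x)=x^{2M}$ gives $\sup_n\sum_s(4n^2w_s)(4n^2x_s)^{2M}<\infty$ for every $M$ (the corresponding moments of the limiting measure are finite since its density decays exponentially), and these uniform moment bounds let the limit pass through the now-infinite sums.

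The heaviest part is the middle step: the closed-form evaluation of $p_k'$ and $p_{k+1}$ at the three node families and their simplification into the $\sinh/\cosh$ (and $\cos$) form of the Christoffel numbers --- parallel to the proof of Theorem~\ref{quad1}, but longer, with the genuinely new feature being the $\cosh\tfrac{\pi(\alpha-\beta)j}{2}$ and $\cos\tfrac{2\pi\alpha j}{\alpha+\beta}$ terms produced by $m\ne m'$. A secondary delicate point is the two interchanges of limit in the last step --- the limit versus the now-infinite integral, and the limit versus the now-infinite sums --- for which the exponential growth of the weight family and the resulting uniform moment bounds are exactly what is needed.
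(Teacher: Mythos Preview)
Your approach is essentially the paper's own: the paper identifies exactly the orthogonal polynomial $\sin(2n\sin^{-1}\!\sqrt t)\sinh((m+m')\sinh^{-1}\!\sqrt{t/a})/\sqrt{1-t}$ (with the extra $\sqrt{a+t}$ factor when $m,m'$ have equal parity) for the product weight $\sqrt{(1-t)(a+t)}/\widetilde\rho_a$, remarks that Gauss quadrature applies, and then simply states the theorem as the limiting form of that finite quadrature. You have spelled out the computation of $p_{k+1}$, the rescaling $x=4n^2t$, and the dominated-convergence and uniform-moment justifications for passing to the limit, none of which the paper writes down; but the strategy is identical.
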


Quadrature formulas can also be studied when the roots of the orthogonal polynomials are not known explicitly.  For example, when the limiting behaviour of the roots as $n,m\to\infty$ is described by the transcendental equation $\tan(x)\tanh(x)=-1$ \cite{berg}.

As a consequence of the considerations outlined above (see also the next section):
\begin{Cor} For any positive integer $n$
\[
    \int\limits_{-a}^1\frac{\sqrt{(1-t)(a+t)}}{a+2t+aT_n(1-2t)}\, dt=\frac{\pi}{4},\quad a>0,
\]
\[
    \int\limits_{-1}^1\frac{1}{1+2t+T_n(1-2t)}\frac{dt}{\sqrt{1-t^2} }=\frac{\pi}{\sqrt{8}}\frac{\left(\sqrt{2}+1\right)^{2n}+1}{\left(\sqrt{2}+1\right)^{2n}-1},
\]
and positive integers $n$ and $m$ of the same parity
\[
\int\limits_{-1}^1\frac{\sqrt{1-t^2}}{\{1+2t+T_n(1-2t)\}\{1+2t+T_m(1-2t)\}}\, dt=\frac{\pi}{4 (n+m)}\sum _{|j|<\frac{n+m}{2}} \frac{1+\cos \frac{2 \pi  j}{n+m}}{2-\cos\frac{2 \pi  j}{n+m}+\cos\frac{2\pi m j }{n+m}}.
\]
\end{Cor}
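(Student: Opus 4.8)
The plan is to recognize each of the three integrals as the zeroth moment $\int w(t)\,dt$ of one of the Bernstein--Szeg\"o weights already treated in the paper --- after, in the first case, inserting a polynomial factor into the integrand --- and then to evaluate that moment by feeding a polynomial of small degree into the corresponding Gauss quadrature.

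For the first integral write $a+2t+aT_n(1-2t)=a\bigl(1+2t/a+T_n(1-2t)\bigr)=a\,\rho_a(t)$ with $m=1$ and $\rho_a$ as in \eqref{rho_a_t}, so the integral is $\tfrac1a\int_{-a}^{1}(1-t)(a+t)\,\{\rho_a(t)\sqrt{(1-t)(a+t)}\}^{-1}\,dt$, i.e. $\tfrac1a$ times the value of the quadrature of Theorem~\ref{quad1} applied to $p(t)=(1-t)(a+t)$ (degree $2$, admissible once $n\ge 2$). For the second integral $1+2t+T_n(1-2t)=\rho(t)$ is $\rho_a$ with $a=1,\ m=1$, and we want the zeroth moment of $\{\rho(t)\sqrt{1-t^2}\}^{-1}$, i.e. Theorem~\ref{quad1} at $a=1,\ m=1$ applied to $p\equiv1$. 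For the third integral the denominator is $\widetilde\rho(t)=\rho^{(n)}(t)\,\rho^{(m)}(t)$ with $\rho^{(\nu)}(t)=T_\nu(1-2t)+1+2t$; writing $\rho^{(\nu)}(\cos\theta)=|h^{(\nu)}(e^{i\theta})|^2$ as in Section~\ref{function} gives $\widetilde\rho(\cos\theta)=|H(e^{i\theta})|^2$ with $H=h^{(n)}h^{(m)}$, and the same $\xi,\eta$--bookkeeping as in Section~\ref{pth1} (using $\xi^{(n)}\eta^{(m)}+\eta^{(n)}\xi^{(m)}=\sqrt{t(1+t)}\sin\bigl((n+m)\sin^{-1}\!\sqrt t\bigr)$) shows via \eqref{pk1} that the degree-$\tfrac{n+m}{2}$ orthonormal polynomial for $\sqrt{1-t^2}\,\{\widetilde\rho\}^{-1}$ is a constant multiple of $\sqrt t\,\sin((n+m)\sin^{-1}\!\sqrt t)/\sqrt{1-t}$, whose roots are $0$ and $\sin^2\frac{\pi j}{n+m}$, $j=1,\dots,\frac{n+m}{2}-1$.

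Now carry out the quadratures. For the second integral, $p\equiv1$ in \eqref{quadrature} at $a=1,\ m=1$ (the second sum there is empty) together with the half-angle identities $\tanh\sinh^{-1}u=u/\sqrt{1+u^2}$ and $\sinh(2\sinh^{-1}u)=2u\sqrt{1+u^2}$ turns $\tfrac{\tanh(\alpha_{2i}/2n)}{\sinh(\alpha_{2i}/n)}$ into $\tfrac12\bigl(1+\sin^2\tfrac{\pi i}{n}\bigr)^{-1}$, so that (folding $i\mapsto n-i$ and absorbing the $i=0$ term) the moment equals $\tfrac{\pi}{2n}\sum_{i=0}^{n-1}(1+\sin^2\tfrac{\pi i}{n})^{-1}$; the remaining sum is the classical one --- with $1+\sin^2\phi=\tfrac12(3-\cos 2\phi)$, evaluating $\sum_{i=0}^{n-1}(b-\cos\tfrac{2\pi i}{n})^{-1}$ at $b=3$ over the $n$-th roots of unity (partial fractions plus $\sum_i(z-e^{2\pi ik/n})^{-1}=nz^{n-1}/(z^n-1)$) gives $\tfrac{n}{2\sqrt2}\,\frac{(\sqrt2+1)^{2n}+1}{(\sqrt2+1)^{2n}-1}$ because $3+\sqrt8=(\sqrt2+1)^2$, which is the asserted value. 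For the first integral, the same manipulation of \eqref{quadrature} at $m=1$ with $p(t)=(1-t)(a+t)$ cancels the factors $(a+\sin^2\tfrac{\pi i}{n})$ in the weights and leaves $\tfrac1a\bigl(\tfrac{\pi a}{2n}+\tfrac{\pi a}{n}\sum_{i=1}^{(n-1)/2}\cos^2\tfrac{\pi i}{n}\bigr)=\tfrac{\pi}{4}$, using $\sum_{i=1}^{(n-1)/2}\cos^2\tfrac{\pi i}{n}=\tfrac{n-2}{4}$. For the third integral take $p\equiv1$ in the Gauss quadrature and compute the weights from \eqref{weights} by Christoffel--Darboux: $p_{\frac{n+m}{2}+1}$ is obtained from \eqref{pk1} together with the three-term recurrence exactly as $p_{\frac{m+n}{2}+1}$ was in \eqref{pk1a} (leading-coefficient ratio $2$ at $a=1$). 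The decisive point is that at a node $t_j=\sin^2\tfrac{\pi j}{n+m}$ one has $T_\nu(1-2t_j)=\cos\tfrac{2\pi\nu j}{n+m}$, and since $n\equiv-m\pmod{n+m}$ the two factors $\rho^{(n)}(t_j)$ and $\rho^{(m)}(t_j)$ coincide and equal $2-\cos\tfrac{2\pi j}{n+m}+\cos\tfrac{2\pi mj}{n+m}$; one of these coincident factors appears in $p_{\frac{n+m}{2}+1}(t_j)$ (evaluating $\xi^{(n)}\xi^{(m)}-\eta^{(n)}\eta^{(m)}$ at $t_j$ via $\tfrac{nj}{n+m}=j-\tfrac{mj}{n+m}$ produces $\pm\tfrac12\rho^{(m)}(t_j)$), so $w_j$ retains only a single power of $\rho^{(m)}(t_j)$ in the denominator, giving $w_0=\tfrac{\pi}{4(n+m)}$ and $w_j=\tfrac{\pi}{2(n+m)}\,\frac{1+\cos(2\pi j/(n+m))}{2-\cos(2\pi j/(n+m))+\cos(2\pi mj/(n+m))}$; summing and folding $j\mapsto -j$ reproduces the stated trigonometric sum.

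The even cases go by the same scheme: for even $n$ one uses the opposite-parity analogues of Theorem~\ref{quad1} (the weights $\sqrt{(1-t)/(a+t)}\,\{\rho_a\}^{-1}$ and $\sqrt{(a+t)/(1-t)}\,\{\rho_a\}^{-1}$ of \eqref{pk2}, with orthogonal polynomials $\propto\eta_a/\sqrt{1-t}$ and $\propto\xi_a/\sqrt{a+t}$ and roots $\sin^2\tfrac{\pi i}{n}$, $\sin^2\tfrac{(2i-1)\pi}{2n}$); for the second integral one first writes $\{\rho\sqrt{1-t^2}\}^{-1}=\tfrac12\bigl(\tfrac1\rho\sqrt{\tfrac{1-t}{1+t}}+\tfrac1\rho\sqrt{\tfrac{1+t}{1-t}}\bigr)$ and expresses its zeroth moment as a half-sum of two such quadratures (again collapsing to $\tfrac{\pi}{2n}\sum_{i=0}^{n-1}(1+\sin^2\tfrac{\pi i}{n})^{-1}$), and for the first integral $\tfrac1a\int(a+t)\cdot\tfrac1{\rho_a}\sqrt{\tfrac{1-t}{a+t}}\,dt$ works the same way; the residual case $n=1$ of the first integral is a one-line elementary evaluation. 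I expect the only genuinely laborious step --- the main obstacle --- to be the explicit computation of the Gauss weights for the non-classical weight $\sqrt{1-t^2}\,\{\widetilde\rho\}^{-1}$ in the third integral: producing $p_{\frac{n+m}{2}+1}$, differentiating $p_{\frac{n+m}{2}}$ at the nodes, and verifying the cancellation that reduces $\widetilde\rho(t_j)=\rho^{(m)}(t_j)^2$ to a single power in the denominator; everything else is a direct appeal to Theorem~\ref{quad1} or elementary finite-sum bookkeeping.
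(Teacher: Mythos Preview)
Your proposal is correct and follows the same approach the paper itself indicates: the Corollary is stated without proof as ``a consequence of the considerations outlined above (see also the next section)'', i.e.\ as a direct application of the Gauss quadratures of Section~\ref{gq} with $m=1$ (and, for the third integral, of the product--weight quadrature described just before Theorem~7). Your identifications $a\rho_a(t)=a+2t+aT_n(1-2t)$, the choice $p(t)=(1-t)(a+t)$ for the first integral, the reduction of the second integral to $\tfrac{\pi}{2n}\sum_{i=0}^{n-1}(1+\sin^2\tfrac{\pi i}{n})^{-1}$ and its evaluation via $3+\sqrt{8}=(\sqrt2+1)^2$, and the observation $\rho^{(n)}(t_j)=\rho^{(m)}(t_j)$ at the nodes $t_j=\sin^2\tfrac{\pi j}{n+m}$ for the third integral are all sound.

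One small remark: for the second integral the paper's parenthetical ``see also the next section'' points to Theorem~\ref{generating_functions}, whose formula \eqref{integraltosummation2} is stated for \emph{all} integers $n,m$ and already writes the integral as a single sum over $j=1,\dots,2n$; using it with $m=1$, $a=1$, $p\equiv1$ gives the answer uniformly in the parity of $n$ without the $\sqrt{(1-t)/(1+t)}+\sqrt{(1+t)/(1-t)}$ split. Your route via the parity split is slightly longer but perfectly valid, and has the advantage that it stays entirely within the quadrature framework of Section~\ref{gq}.
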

In general, the sum in the last formula does not appear to have a simple closed-form evaluation.

\section{Finite analogs of generating functions from Kuznetsov's paper \texorpdfstring{\cite{kuznetsov}}{}}\label{section_generating_functions}

We are going to rewrite the right hand side of \eqref{quadrature} in Theorem \ref{quad1} as a single sum.

\begin{Th}\label{generating_functions}
Let $n$, $m$, $u$  be integers such that $|u|<n$. Let $a>0$ and define $\alpha_z$ as in \eqref{ab}. Then
\begin{align}\label{integraltosummation1}
    \nonumber\int\limits_{-a}^1&\frac{\cos \bigl(2 u \sin ^{-1}\sqrt{t}\bigr)}{\cos \bigl(2 n \sin ^{-1}\sqrt{t}\bigr)+\cosh \bigl(2 m \sinh ^{-1}\sqrt{t/a}\bigr)}\frac{dt}{\sqrt{(1-t)(a+t)}}\\&{\phantom{................................}}=\frac{\pi}{2n}\sum_{j=1}^{2n}\frac{(-1)^{j-1}}{\coth\frac{\alpha_j}{2n}}\left\{\tanh\frac{m\alpha_j}{2 n}\right\} ^{(-1)^j}\!\!\!\!\cdot\cos\frac{\pi j u}{n},
\end{align}
and, more generally, for any polynomial $p(t)$ of degree less than $n$
\begin{align}\label{integraltosummation2}
    \nonumber\int\limits_{-a}^1&\frac{p(t)}{\cos \bigl(2 n \sin ^{-1}\sqrt{t}\bigr)+\cosh \bigl(2 m \sinh ^{-1}\sqrt{t/a}\bigr)}\frac{dt}{\sqrt{(1-t)(a+t)}}\\&{\phantom{................................}}=\frac{\pi}{2n}\sum_{j=1}^{2n}\frac{(-1)^{j-1}}{\coth\frac{\alpha_j}{2n}}\left\{\tanh\frac{m\alpha_j}{2 n}\right\} ^{(-1)^j}\!\!\!\!\cdot p\Bigl(\sin^2\frac{\pi j}{2n}\Bigr).
\end{align}
\end{Th}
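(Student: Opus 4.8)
The plan is to deduce \eqref{integraltosummation2} from the special case \eqref{integraltosummation1}, and to obtain \eqref{integraltosummation1} from the Gauss quadrature of Theorem \ref{quad1} together with a finite trigonometric identity. For the first reduction, observe that $\cos(2u\sin^{-1}\sqrt t)=T_u(1-2t)$ is a polynomial in $t$ of exact degree $u$ and $T_{-u}=T_u$, so the functions $\cos(2u\sin^{-1}\sqrt t)$, $0\le u\le n-1$, form a basis of the space of polynomials of degree $<n$. Since both sides of \eqref{integraltosummation2} are linear in $p$, it suffices to prove \eqref{integraltosummation2} for $p(t)=\cos(2u\sin^{-1}\sqrt t)$; and for this $p$ one has $p(\sin^2\frac{\pi j}{2n})=T_u(\cos\frac{\pi j}{n})=\cos\frac{\pi j u}{n}$, so \eqref{integraltosummation2} becomes exactly \eqref{integraltosummation1}.

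\emph{Reduction to a three-part sum.} Assume first that $n$ and $m$ are both odd, and apply Theorem \ref{quad1} to $p(t)=T_u(1-2t)$ (admissible, since $\deg p=u\le n-1\le n+m-1$). At the three groups of nodes in \eqref{quadrature} this polynomial takes the values $T_u(1)=1$ at $t=0$; $T_u(1-2\sin^2\frac{\pi i}{n})=\cos\frac{2\pi i u}{n}$ at $t=\sin^2\frac{\pi i}{n}$; and, using $1+2a\sin^2\frac{\pi j}{m}=\cosh\frac{\beta_{2j}}{m}$ (which follows from \eqref{ab}), $T_u(1+2a\sin^2\frac{\pi j}{m})=\cosh\frac{u\beta_{2j}}{m}$ at $t=-a\sin^2\frac{\pi j}{m}$. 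Thus the left side of \eqref{integraltosummation1} equals an explicit sum of three trigonometric/hyperbolic pieces. When $n,m$ are not both odd one uses instead the quadrature formulas for the other parity combinations referred to after the proof of Theorem \ref{quad1}; the node values transform in the same way.

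\emph{Collapsing to a single sum.} It then remains to show that this three-part expression equals $\frac{\pi}{2n}\sum_{j=1}^{2n}\frac{(-1)^{j-1}}{\coth(\alpha_j/2n)}\{\tanh(m\alpha_j/2n)\}^{(-1)^j}\cos\frac{\pi j u}{n}$. I would first note that the summand is invariant under $j\mapsto 2n-j$ (because $\alpha_{2n-j}=\alpha_j$, $(-1)^{(2n-j)-1}=(-1)^{j-1}$, $\cos\frac{\pi(2n-j)u}{n}=\cos\frac{\pi j u}{n}$) and that the $j=2n$ term vanishes since $\alpha_{2n}=0$; hence the sum folds onto $j=1,\dots,n$. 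Writing $x_j=\sin^2\frac{\pi j}{2n}$, one has $\cos(2n\sin^{-1}\sqrt{x_j})=(-1)^j$, so $\rho_a(x_j)=(-1)^j+\cosh\frac{m\alpha_j}{n}$ equals $2\cosh^2\frac{m\alpha_j}{2n}$ for even $j$ and $2\sinh^2\frac{m\alpha_j}{2n}$ for odd $j$; this lets one rewrite the $j$-th weight as $\frac{\pi}{2n}(-1)^{j+1}\tanh\frac{\alpha_j}{2n}\sinh\frac{m\alpha_j}{n}/\rho_a(x_j)$. Splitting the folded sum into even-$j$ and odd-$j$ parts, the even-$j$ part (nodes $\sin^2\frac{\pi i}{n}$) differs from the $\sin^2\frac{\pi i}{n}$-node part of \eqref{quadrature} only by the term $-\frac{\pi}{n}\sum_i\tanh\frac{\alpha_{2i}}{2n}\coth\frac{m\alpha_{2i}}{2n}p(\sin^2\frac{\pi i}{n})$, which comes out of the elementary identity $\frac{2}{\sinh 2x}+\tanh x=\coth x$. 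So the whole claim reduces to showing that the odd-$j$ part of the single sum equals the sum of the $p(0)$-term of \eqref{quadrature}, the $-a\sin^2\frac{\pi j}{m}$-node part of \eqref{quadrature}, and this residual term.

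\emph{Main obstacle.} The last matching is the crux. It equates a quadrature supported at the $(n+1)/2$ points $\sin^2\frac{\pi j}{2n}$ with $j$ odd (the zeros of $T_n(1-2t)+1$) against one supported at $0$ together with the $(m+n-2)/2$ points $\sin^2\frac{\pi i}{n}$ and $-a\sin^2\frac{\pi j}{m}$; since these node sets are entirely different, it cannot be obtained from an abstract exactness argument (both rules are exact on polynomials of degree $<n$ — but that is exactly what is being proved), and must be verified by direct evaluation. I expect this to be cleanest if one keeps $p(t)=T_u(1-2t)$ and sums the resulting elementary series in $u$: each of $\sum_u\cos\frac{2\pi iu}{n}\cos\frac{\pi j u}{n}$ and $\sum_u\cosh\frac{u\beta_{2k}}{m}\cos\frac{\pi j u}{n}$ is a finite geometric-type sum, so the identity becomes a discrete orthogonality / partial-fraction statement in the variable $e^{i\pi j/n}$. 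The delicate part of the bookkeeping is tracking how the isolated $p(0)$-term and the negative nodes of \eqref{quadrature} get absorbed into a sum supported entirely in $[0,1]$; this is where I anticipate the real work of the proof to lie.
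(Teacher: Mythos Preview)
Your overall strategy matches the paper's exactly: reduce \eqref{integraltosummation2} to \eqref{integraltosummation1} via the Chebyshev basis, apply the Gauss quadrature \eqref{quadrature} to $p(t)=T_u(1-2t)$, fold the target sum using $j\mapsto 2n-j$, and then reconcile the three-part quadrature sum with the single sum over $j=1,\dots,2n$. You also correctly locate the crux: converting the $\beta$-node contribution (supported at $-a\sin^2\frac{\pi j}{m}$) into something supported at the points $\sin^2\frac{\pi j}{2n}$, and you already have in hand the identity $2/\sinh(2x)-\coth x=-\tanh x$ that the paper uses at the very end.

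Where your proposal remains a sketch is precisely at this crux, and your proposed route (``sum the elementary series in $u$'') is not quite how the paper closes the gap. The paper's device is to rewrite each $\beta$-node summand, before summing over $j$, via the partial fractions identity
\[
\frac{\tanh z}{\sinh(2nz)}\cosh(2uz)=\frac{1}{2n}\sum_{i=1}^{2n-1}(-1)^{i-1}\frac{\sin^2\frac{\pi i}{2n}}{\sinh^2 z+\sin^2\frac{\pi i}{2n}}\cos\frac{\pi i u}{n},\qquad |u|<n,
\]
applied with $z=\beta_{2j}/(2m)=\sinh^{-1}\!\bigl(a^{1/2}\sin\frac{\pi j}{m}\bigr)$. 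This already places the desired factors $\cos\frac{\pi i u}{n}$ in the summand. One then interchanges the sums and evaluates the inner $j$-sum in closed form using
\[
\sum_{j=1}^{(m-1)/2}\frac{\sinh^2 z}{\sin^2\frac{\pi j}{m}+\sinh^2 z}=\frac{m\tanh z}{2\tanh(mz)}-\frac{1}{2}\qquad(m~\text{odd}).
\]
The $-\tfrac12$ produces a term that cancels the $p(0)/(2mn)$ piece of \eqref{quadrature} via $\sum_{i=1}^{2n-1}(-1)^{i-1}\cos\frac{\pi i u}{n}=1$, and the $\tanh/\tanh$ piece combines with the $\alpha$-node sum through exactly the hyperbolic identity you wrote. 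So the ``main obstacle'' you flag is resolved not by summing in $u$ but by these two explicit identities in the $\beta$-nodes; supplying them turns your outline into the paper's proof.
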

\begin{proof} Here, we prove the theorem only for an odd pair of integers $n$, $m$. Other alternatives can be proved in a similar way starting from appropriate variants of Gauss quadrature formula. 

Since \eqref{integraltosummation2} is a consequence of \eqref{integraltosummation1} (any polynomial can be written as a linear combination of Chebyshev polynomials of the first kind), it is enough to prove \eqref{integraltosummation1}.

Equation \eqref{quadrature} with degree $u$ polynomial $p(t)=\cos \bigl(2 u \sin ^{-1}\sqrt{t}\bigr)$ yields for the value of the integral
\[
\frac{\pi }{2mn}+\frac{2 \pi }{n}\sum_{i=1}^{n/2}\frac{\tanh\frac{\alpha_{2i}}{2n}}{\sinh\bigl(\frac{m}{n}\alpha_{2i}\bigr)}\cos\frac{2\pi i u}{n}+\frac{2 \pi }{m}\sum_{j=1}^{m/2}\frac{\tanh\frac{\beta_{2j}}{2m}}{\sinh\bigl(\frac{n}{m}\beta_{2j}\bigr)}\cosh\bigl(\tfrac{u}{m}\beta_{2j}\bigr).
\]
We transform the first sum according to ${2\sum_{i=1}^{n/2}=\sum_{i=1}^{n-1}}$ using the fact that the summand is symmetric under $i\to n-i$. In the second sum, we rewrite the summand using the identity valid for integers $|u|<n$
\[
\frac{\tanh z}{\sinh (2 n z)}\cosh (2 u z)=\frac{1}{2n}\sum_{i=1}^{2n-1}(-1)^{i-1}\frac{\sin^2\frac{\pi i}{2n}}{\sinh^2z+\sin^2\frac{\pi i}{2n}}\cos\frac{\pi i u}{n},
\]
with $z=2m\beta_{2j}=\sinh^{-1}\Bigl(\sin\frac{\pi j}{m}\Bigr)$:
\[
\frac{\pi }{mn}\sum_{j=1}^{m/2}\sum_{i=1}^{2n-1}(-1)^{i-1}\frac{\sin^2\frac{\pi i}{2n}}{\sin^2\frac{\pi j}{m}+\sin^2\frac{\pi i}{2n}}\cos\frac{\pi i u}{n}.
\]
For odd $m$, the sum over $j$ is
\[
\sum _{j=1}^{m/2} \frac{\sinh ^2z}{\sin ^2\frac{\pi j}{m}+\sinh ^2z}=\frac{m \tanh z}{2\tanh (mz)}-\frac{1}{2}.
\]
After these transformations, the value of the integral becomes
\[
\frac{\pi }{2mn}+\frac{\pi }{n}\sum_{i=1}^{n-1}\frac{\tanh\frac{\alpha_{2i}}{2n}}{\sinh\bigl(\frac{m}{n}\alpha_{2i}\bigr)}\cos\frac{2\pi i u}{n}+\frac{\pi }{2n}\sum_{i=1}^{2n-1}(-1)^{i-1}\frac{\tanh\frac{\alpha_{i}}{2n}}{\tanh\bigl(\frac{m}{2n}\alpha_{i}\bigr)}\cos\frac{\pi i u}{n}-\frac{\pi }{2mn}\sum_{i=1}^{2n-1}(-1)^{i-1}\cos\frac{\pi i u}{n}.
\]
The last sum cancels the first term due to the simple identity valid for integers $|u|<n$
\[
\sum_{i=1}^{2n-1}(-1)^{i-1}\cos\frac{\pi i u}{n}=1.
\]
Next, we split the second sum into even and odd terms, and then combine even terms with the first sum using $~2/\sinh(2x)-\coth(x)=-\tanh(x)$.
\end{proof}

There is a transformation for the right hand side of the formula \eqref{integraltosummation1} of Theorem \ref{generating_functions}
\[
\frac{\pi}{2m}\sum_{j=1}^{2m}\frac{(-1)^{j-1}}{\coth\frac{\beta_j}{2m}}\left\{\tanh\frac{n\beta_j}{2 m}\right\} ^{(-1)^j}\!\!\!\!\cdot\cosh\frac{u\beta_j}{m},\qquad |u|<m,
\]
analogous to imaginary transformation of Jacobi's elliptic functions.

Theorem \ref{generating_functions} is a non-symmetric ($m\neq n$, $a\neq 1$) extention of Theorem 4 from \cite{nicholson}. The proof of Theorem 4 in \cite{nicholson} used ad hoc methods that could not be applied to more general integrals. The case $u=0$ of Theorem \ref{generating_functions} is a finite analog of Ismail and Valent's integral \cite{ismail}. There are also other integrals with $\cos \left(2 u \sin ^{-1}\sqrt{t}\right)$ replaced by $\sin \left(2 u \sin ^{-1}\sqrt{t}\right)/\sqrt{t(1-t)}$, or with the differences $\cosh \bigl(2 m \sinh ^{-1}\sqrt{t/a}\bigr)-\cos \bigl(2 n \sin ^{-1}\sqrt{t}\bigr)$ in the denominator (Section \ref{appendix}), etc. 

\section{Constructing measures on \texorpdfstring{$\mathbb{R}$}{R}}\label{contructing_measures}

Let $\mu$ be a positive measure on $\mathbb{R}$ with infinitely many points in its support, and with finite moments $\int x^jd\mu$,~ $j=0,1,2,\ldots$ . Let
\[
p_k(x)=\varkappa_kx^k+\ldots,\qquad \varkappa_k>0,
\]
denote the corresponding orthonormal polynomials with positive leading coefficients. The orthonormality conditions are
\[
\int p_i(t)p_j(t)d\mu(t)=\delta_{ij},\quad i,j=0,1,2,\ldots .
\]
It was proved in \cite{lubinsky2},\cite{lubinsky}, that: If $\operatorname{Im}\gamma\neq 0$, then for any polynomial $p(x)$ of degree at most $2k-2$
\begin{equation}\label{measure1}
    \int\limits_{\mathbb{R}}p(x)\frac{|\operatorname{Im}\gamma|/\pi}{|\gamma p_k(x)-p_{k-1}(x)|^2}\, dx=\frac{\varkappa_{k-1}}{\varkappa_k}\int p(t)d\mu(t),
\end{equation}
\begin{equation}\label{measure4}
    \int\limits_{\mathbb{R}}p(x)\frac{|\operatorname{Im}\gamma|/\pi}{|p_k(x)-\gamma p_{k-1}(x)|^2}\, dx=\frac{\varkappa_{k-1}}{\varkappa_k}\int p(t)d\mu(t).
\end{equation}
A closely related formula can also be found in the book \cite{atkinson}, Theorem 4.9.1.

Considerations given in the paper \cite{kuznetsov2} suggest that the constant $\gamma$ in \eqref{measure1} and \eqref{measure4} could be replaced by a Pick function $\phi(x)$, i.e. a function analytic in the upper half plane 
\[
\mathbb{H^+}=\{z\in\mathbb{Z}: \operatorname{Im}z>0\}
\]
and that maps upper half plane into itself: $\phi(\mathbb{H^+})\subseteq \mathbb{H^+}$ (to be precise, a more restricted class of functions from Definition 2 of \cite{kuznetsov2}).

Proof of this statement in its full generality is beyond the scope of the present paper. Below we prove it only for rational Pick functions of a special type, in which case a proof based on standard contour integration arguments can be given.

\begin{Th}\label{measure3} Let $\mu$, $\{p_j\}$, and $\{\varkappa_j\}$ be defined as above. Let $\phi(x)$ be a rational function of the form
\[
\phi(x)=\beta x+\gamma -\sum\frac{c_r}{x-z_r},\qquad \beta\ge 0,~\operatorname{Im}\gamma>0,~c_r\ge 0,~\operatorname{Im}z_r< 0.
\]
Then for any polynomial $p(x)$ of degree at most $2k-2$
\begin{equation}\label{measure2}
    \int\limits_{\mathbb{R}}p(x)\frac{\operatorname{Im}\phi(x)/\pi}{|\phi(x)p_k(x)-p_{k-1}(x)|^2}\, dx=\frac{\varkappa_{k-1}}{\varkappa_k}\int p(t)d\mu(t),
\end{equation}
\begin{equation}\label{measure5}
    \int\limits_{\mathbb{R}}p(x)\frac{\operatorname{Im}\phi(x)/\pi}{|p_k(x)+\phi(x)p_{k-1}(x)|^2}\, dx=\frac{\varkappa_{k-1}}{\varkappa_k}\int p(t)d\mu(t).
\end{equation}
\end{Th}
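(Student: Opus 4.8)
\emph{Proof strategy.} I would prove \eqref{measure2} and \eqref{measure5} by a single complex‑analytic argument; I describe it for \eqref{measure2}, the other being identical after replacing $\phi p_k-p_{k-1}$ everywhere by $p_k+\phi p_{k-1}$ (and $\phi q_k-q_{k-1}$ by $q_k+\phi q_{k-1}$). Introduce the associated functions of the second kind $q_j(z)=\int\frac{p_j(z)-p_j(t)}{z-t}\,d\mu(t)$ (polynomials of degree $j-1$), so that $q_j(z)-p_j(z)m(z)=-\int\frac{p_j(t)}{z-t}\,d\mu(t)=O(z^{-j-1})$ as $z\to\infty$ by orthogonality, where $m(z)=\int\frac{d\mu(t)}{z-t}$, and recall the Wronskian‑type identity $p_k(z)q_{k-1}(z)-p_{k-1}(z)q_k(z)=-\varkappa_k/\varkappa_{k-1}$ (a consequence of the three‑term recurrence, or of Christoffel--Darboux). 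Let $\nu$ be the measure on the left of \eqref{measure2}; since $\operatorname{Im}\phi>0$ on $\mathbb{R}$, $\nu$ has a positive, smooth, rapidly decaying density, and the claim is equivalent to the assertion that its Cauchy transform $C(z)=\int\frac{d\nu(x)}{z-x}$ agrees with $\frac{\varkappa_{k-1}}{\varkappa_k}\,m(z)$ in its Laurent coefficients at $\infty$ down through order $z^{-(2k-1)}$.

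\emph{Key lemma (the crux).} The polynomial $\phi(z)p_k(z)-p_{k-1}(z)$ in $z$ has no zero in the closed upper half‑plane. Indeed, the hypotheses $\beta\ge0$, $\operatorname{Im}\gamma>0$, $c_r\ge0$, $\operatorname{Im}z_r<0$ give $\operatorname{Im}\phi(z)>0$ whenever $\operatorname{Im}z\ge0$, so $\phi$ is a Pick function analytic on $\mathbb{R}$; while by the confluent Christoffel--Darboux formula $p_{k-1}/p_k=\sum_j r_j/(z-x_j)$ with all residues $r_j=p_{k-1}(x_j)/p_k'(x_j)>0$ and all nodes $x_j$ real, so $p_{k-1}/p_k$ maps the open upper half‑plane into the open lower half‑plane and $\mathbb{R}$ into $\mathbb{R}$. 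At a hypothetical zero $z_0$ with $\operatorname{Im}z_0\ge0$ one has $p_k(z_0)\neq0$ (consecutive orthonormal polynomials share no zeros), hence $\phi(z_0)=p_{k-1}(z_0)/p_k(z_0)$, which contradicts the signs of the imaginary parts. For \eqref{measure5} one uses instead that $p_k/p_{k-1}$ is itself a Pick function — clear from its continued‑fraction expansion in terms of the (positive) recurrence coefficients — so $-p_k/p_{k-1}$ interchanges the half‑planes, and the same contradiction rules out a zero of $p_k+\phi p_{k-1}$ in the closed upper half‑plane.

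\emph{Matching the Cauchy transforms.} On $\mathbb{R}$ write $\operatorname{Im}\phi(x)=\tfrac1{2i}\bigl(\phi(x)-\bar\phi(x)\bigr)$, where $\bar\phi$ denotes $\phi$ with all coefficients conjugated; then $\bar\phi(x)=\overline{\phi(x)}$ on $\mathbb{R}$, the poles of $\bar\phi$ lie in the upper half‑plane, and (by the conjugate of the key lemma) $\bar\phi(z)p_k(z)-p_{k-1}(z)$ has no zero in the closed lower half‑plane. Define
\[
\Psi(z)=\frac{\varkappa_{k-1}}{\varkappa_k}\cdot\frac{\phi(z)q_k(z)-q_{k-1}(z)}{\phi(z)p_k(z)-p_{k-1}(z)}\ \ (\operatorname{Im}z>0),\qquad \Psi(z)=\frac{\varkappa_{k-1}}{\varkappa_k}\cdot\frac{\bar\phi(z)q_k(z)-q_{k-1}(z)}{\bar\phi(z)p_k(z)-p_{k-1}(z)}\ \ (\operatorname{Im}z<0).
\]
By the key lemma and its conjugate, $\Psi$ is analytic off $\mathbb{R}$ and extends continuously to $\mathbb{R}$ from each side. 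Expanding the two fractions over a common denominator and inserting the Wronskian identity shows that the boundary values obey $\Psi_+(x)-\Psi_-(x)=-2i\operatorname{Im}\phi(x)\big/|\phi(x)p_k(x)-p_{k-1}(x)|^2$, which is exactly the jump $C_+(x)-C_-(x)=-2\pi i\,\nu'(x)$ of $C$ across $\mathbb{R}$ (Sokhotski--Plemelj). Hence $C-\Psi$ continues to an entire function; as $C$ and $\Psi$ are bounded on $\mathbb{C}$ and tend to $0$ at $\infty$, Liouville's theorem gives $C\equiv\Psi$. Finally,
\[
\Psi(z)-\frac{\varkappa_{k-1}}{\varkappa_k}m(z)=\frac{\varkappa_{k-1}}{\varkappa_k}\cdot\frac{\phi(z)\bigl(q_k(z)-p_k(z)m(z)\bigr)-\bigl(q_{k-1}(z)-p_{k-1}(z)m(z)\bigr)}{\phi(z)p_k(z)-p_{k-1}(z)}=O\bigl(z^{-2k}\bigr),
\]
since the numerator is $O(z^{-k})$ (using $\phi(z)=O(z)$ and the decay of $q_j-p_jm$) while the denominator grows at least like $|z|^k$; this identifies the first $2k-1$ Laurent coefficients of $C$ with those of $\frac{\varkappa_{k-1}}{\varkappa_k}m$, i.e.\ $\int x^j\,d\nu=\frac{\varkappa_{k-1}}{\varkappa_k}\int t^j\,d\mu$ for $j=0,\dots,2k-2$, which is \eqref{measure2}; \eqref{measure5} follows the same way. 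The main obstacle is the key lemma: one must correctly exploit the Pick/anti‑Pick structure of $\phi$, of $p_{k-1}/p_k$, and of $p_k/p_{k-1}$, and in particular the strict positivity $\operatorname{Im}\phi>0$ on the real axis — without which $\Psi$ would fail to be continuous across $\mathbb{R}$ and the Liouville step would collapse.
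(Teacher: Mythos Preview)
Your argument is correct, but it is genuinely different from the paper's. The paper proceeds by a direct residue calculation: it writes the left side of \eqref{measure2} as $\operatorname{Im}G_\phi$ with
\[
G_\phi=\operatorname{P.V.}\int_{\mathbb{R}}\frac{1}{\pi}\,\frac{p(x)}{p_k(x)\bigl(p_{k-1}(x)-\phi(x)p_k(x)\bigr)}\,dx,
\]
closes the contour in the upper half-plane (using exactly your key lemma---that $\phi-p_{k-1}/p_k$ is a Pick function---to guarantee analyticity there), collects the half-residues at the real zeros $x_s$ of $p_k$, and recognises the resulting sum $\sum_s p(x_s)\big/\bigl(p_{k-1}(x_s)p_k'(x_s)\bigr)$ as the Gauss quadrature for $\int p\,d\mu$. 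No functions of the second kind, no Stieltjes transform $m(z)$, no Plemelj jump or Liouville step appear. Your route instead identifies the full Cauchy transform of the left-hand measure with the Nevanlinna-type quotient $\tfrac{\varkappa_{k-1}}{\varkappa_k}\,\dfrac{\phi q_k-q_{k-1}}{\phi p_k-p_{k-1}}$ and then matches asymptotics at infinity; this is closer in spirit to the indeterminate-moment-problem machinery the paper points to (references \cite{ismail}, \cite{kuznetsov2}) and would extend more naturally to non-rational Pick functions $\phi$---precisely the generalisation the paper says is ``beyond the scope of the present paper''. The paper's proof is shorter and entirely elementary for the rational $\phi$ actually treated. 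One small slip in your write-up: $\phi(z)p_k(z)-p_{k-1}(z)$ is not a polynomial but a rational function with poles at the $z_r$; since these lie in the open lower half-plane this is harmless for your upper-half-plane analysis.
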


\begin{proof} It is sufficient to prove the theorem for a real polynomail $p(x)$. Denote the left hand side of \eqref{measure2} as $I_{\phi}$. Consider the Cauchy principal value (\cite{lubinsky2}, Section 2)
\[
G_{\phi}=\operatorname{P.V.} \int\limits_{\mathbb{R}}f(x)\,dx,\qquad f(x)=\frac{1}{\pi}\frac{p(x)}{p_k(x)}\frac{1}{p_{k-1}(x)-\phi(x)p_k(x)}.
\]
Then due to (recall that the orthonormal polynomials $p_j(x)$ are real (\cite{chihara}, Chapter I.3))
\[
p(x)\frac{\operatorname{Im}\phi(x)/\pi}{|\phi(x)p_k(x)-p_{k-1}(x)|^2}=\operatorname{Im}f(x), \qquad x\in\mathbb{R},
\]
we have $I_{\phi}=\operatorname{Im} G_{\phi}$.

Let $x_1,x_2,\ldots,x_k$, be the $k$ roots of the polynomial $p_k$.

Consider the contour $C$ in $\mathbb{H^+}\cup \mathbb{R}$ composed of: (i) a semicircle $\Gamma_R$ of large radius $R$ centered at the origin, (ii) $k$ semicircles $\Gamma_{\varepsilon,s}$ of small radius $\varepsilon$ centered at $x_s$, (iii) interval $[-R,R]$ with $k$ subintervals $(x_s-\varepsilon,x_s+\varepsilon)$ excluded. 

Because of interlacing property for the zeros of orthogonal polynomials (\cite{chihara}, Chapter I.5)
\[
\frac{p_{k-1}(z)}{p_k(z)}=\sum_{s=1}^k\frac{d_s}{z-x_s},\qquad d_s> 0,~x_s\in\mathbb{R}.
\]
Thus the rational function
\[
\phi(z)-\frac{p_{k-1}(z)}{p_k(z)}
\]
maps $\mathbb{H^+}$ into itself. Moreover, since $p_k$ has only real zeroes, it follows from this that $p_k(x)\bigl(p_{k-1}(z)-\phi(z)p_k(z)\bigr)$ does not have zeroes in $\mathbb{H^+}$, and as a result $f(z)$ is analytic in $\mathbb{H^+}$. Hence, by residue theorem
\[
\int\limits_Cf(z)\,dz=0.
\]

Next, we take the limit $R\to\infty$, $\varepsilon\to 0$. In this limit, the integral over $\Gamma_R$ vanishes due to $\operatorname{deg}p\le 2k-2$, while the integral over $\Gamma_{\varepsilon,s}$ equals $-\pi i\cdot \underset{z=x_s}{\operatorname{res}}f(z)$, and we obtain
\[
G_{\phi}-\pi i \cdot \frac{1}{\pi}\sum_{s=1}^{k}\frac{p(x_s)}{p_{k-1}(x_s)p_k' (x_s)}=0.
\]
Using $I_{\phi}=\operatorname{Im} G_{\phi}$ and Gauss quadrature formula in the form
\[
\int p(t)d\mu(t)=\frac{\varkappa_{k}}{\varkappa_{k-1}}\sum_{s=1}^{k}\frac{p(x_s)}{p_{k-1}(x_s)p_k' (x_s)}
\]
(the third representation in \eqref{weights} for the weights $w_s$) we complete the proof. Proof of \eqref{measure5} is similar. \end{proof}

Note that $\phi(x)$ given in the statement of Theorem \ref{measure3} satisfies $\operatorname{Im}\phi(x)\ge\operatorname{Im}\gamma>0$ for $x\in\mathbb{R}$.

\textit{Example.} Now we apply Theorem \ref{measure3} to the measure considered in Sections \ref{function} and \ref{pth1}
\[
d\mu(t)=\frac{1}{\cos \bigl(2 n \sin^{-1}\!\sqrt{t}\bigr)+\cosh \bigl(2 m \sinh^{-1}\!\sqrt{t}\bigr)}\frac{dt}{\sqrt{1-t^2}},\quad t\in[-1,1],
\]
where $m$ and $n$ are positive odd integers. The corresponding orthonormal polynomials of degrees $k=\frac{m+n}{2}$ and $k-1$ are
\[
p_{k}(t)=2\pi^{-1/2}\eta(t),
\]
\[
p_{k-1}(t)=2\pi^{-1/2}\bigl\{t\eta(t)+\sqrt{1-t^2}\,\xi(t)\bigr\},
\] 
where
\[
\xi(t)=\cos \bigl(n \sin^{-1}\!\sqrt{t}\bigr)\cosh \bigl(m \sinh^{-1}\!\sqrt{t}\bigr),
\]
\[
\eta(t)=\sin \bigl(n \sin^{-1}\!\sqrt{t}\bigr)\sinh \bigl(m \sinh^{-1}\!\sqrt{t}\bigr).
\]
The ratio of the leading coefficients is ${\varkappa_{k}}/{\varkappa_{k-1}}=2$. Note also that
\[
\cos \bigl(2 n \sin^{-1}\!\sqrt{t}\bigr)+\cosh \bigl(2 m \sinh^{-1}\!\sqrt{t}\bigr)=2\bigl\{\xi^2(t)+\eta^2(t)\bigr\}.
\]
With $\phi(x)$ defined in Theorem \ref{measure3} we obtain for any polynomial $p(x)$ of degree at most $m+n-2$
\begin{equation}\label{integrals}
    \int\limits_{\mathbb{R}}\frac{p(x)\operatorname{Im}\phi(x)}{\bigl|\sqrt{1-x^2}\,\xi(x)-(\phi(x)-x)\eta(x)\bigr|^2}\, dx=\int\limits_{-1}^1 \frac{p(t)}{\xi^2(t)+\eta^2(t)}\frac{dt}{\sqrt{1-t^2}}.
\end{equation}

Note that the limiting form $m,n\to\infty$ of \eqref{integrals} is
\begin{equation}\label{integrals2}
    \frac{1}{2}\int\limits_{\mathbb{R}}\frac{p(x)\operatorname{Im}\phi(x)}{\bigl|\cos(\sqrt{x})\cosh(\alpha \sqrt{x})-\phi(x)\sin(\sqrt{x})\sinh(\alpha \sqrt{x})\bigr|^2}\, dx=\int\limits_{\mathbb{R}}\frac{p(x)}{\cos(2\sqrt{x})+\cosh(2\alpha \sqrt{x})}\, dx.
\end{equation}
More general formulas similar to \eqref{integrals2}, their connection with the moment problem and in particular Nevanlinna parametrization are discussed in \cite{ismail}, \cite{kuznetsov2}.

\section{A finite analog of the integral in Ramanujan's question 353}\label{rq}
\begin{Th} Let $n$ be a positive even integer and $k$ a positive odd integer. Then
    \begin{equation}\label{353m}
    \int\limits_{0}^{1}\frac{\sin (kn \sin^{-1}{t})}{\cos (n \sin^{-1}{t})+\cosh (n \sinh^{-1}{t})}\,\frac{dt}{t}=\frac{\pi}{4}.
\end{equation}
\end{Th}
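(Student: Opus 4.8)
The plan is to realize the integrand in \eqref{353m} as a kernel-polynomial reproducing integral for a suitable Bernstein--Szeg\H{o} weight, exactly in the style of Section~\ref{pth1}, but now built from the function $\cos(n\sin^{-1}t)+\cosh(n\sinh^{-1}t)$ with a single common integer $n$ rather than two parameters. Writing $t=\sin(\sin^{-1}t)$ and setting, in the spirit of \eqref{h}, $\rho(t)=\cos(n\sin^{-1}t)+\cosh(n\sinh^{-1}t)=T_{n/2}(1-2t^2)+T_{n/2}(1+2t^2)$ after the substitution $t\mapsto t^2$ (legitimate because $n$ is even), I would identify $\rho$ with $|h(e^{i\theta})|^2$ for an explicit polynomial $h$, check it satisfies Lemma~\ref{lemma}, and thereby read off the orthonormal polynomials for the weight $\{\rho(t)\sqrt{1-t^2}\}^{-1}$ on $[-1,1]$ via \eqref{pk}. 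The key point, parallel to \eqref{kth_polynomial}, is that $\sin(n\sin^{-1}t)\,\xi(t)$-type combinations will be the relevant low-degree orthonormal polynomials, with $\sin(\tfrac{n}{2}\sin^{-1}\!\sqrt{\cdot})\sinh(\tfrac{n}{2}\sinh^{-1}\!\sqrt{\cdot})$ vanishing at the origin.

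The next step is to handle the factor $\sin(kn\sin^{-1}t)$ in the numerator. Since $k$ is odd, $\sin(kn\sin^{-1}t)$ is a polynomial in $t$ times $\sqrt{1-t^2}$-free structure; more precisely, using the Chebyshev addition formulas one expands $\sin(kn\sin^{-1}t)$ as a linear combination of $\sin(n\sin^{-1}t)\cdot U_j$-type terms, so that the integrand becomes $p_{k}(t)$ (the degree-$\tfrac{n}{2}$ orthonormal polynomial, up to normalization) divided by $t\cdot\rho(t)\sqrt{1-t^2}$ against a polynomial of controlled degree. Because $p_{\tfrac{n}{2}}(0)=0$, the kernel polynomial $K_{\tfrac{n}{2}}(t,0)$ collapses to a constant multiple of $p_{\tfrac{n}{2}}(t)/t$, exactly as in the displayed simplification of \eqref{kernel} in Section~\ref{pth1}. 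The reproducing property $\int_{-1}^1 K_{\tfrac{n}{2}}(t,0)\,\{\rho(t)\sqrt{1-t^2}\}^{-1}dt=1$ then pins down the value of the integral over $[-1,1]$; passing to $[0,1]$ introduces the factor $1/2$ from evenness/oddness considerations, and combining with the explicit leading-coefficient ratio $\varkappa_{\tfrac{n}{2}+1}/\varkappa_{\tfrac{n}{2}}=2$ yields $\pi/4$.

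The main obstacle I anticipate is bookkeeping the degree constraint: one must verify that after expanding $\sin(kn\sin^{-1}t)/t$ in terms of the orthonormal basis, the resulting polynomial has degree at most $\tfrac{n}{2}-1$, so that only the $p_{\tfrac{n}{2}}(t)/t$ piece of $K_{\tfrac{n}{2}}(t,0)$ contributes and all higher terms integrate to zero by orthogonality. This is where oddness of $k$ is essential — it guarantees $\sin(kn\sin^{-1}t)$ shares the parity pattern of $\sin(n\sin^{-1}t)$ so the quotient is a genuine polynomial of the right parity, and the factor $k$ does not inflate the degree beyond what the degree-$\tfrac{n}{2}$ orthonormal polynomial can absorb through the reproducing kernel. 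A secondary technical point is confirming the Bernstein--Szeg\H{o} hypothesis $\deg\rho<2k$ with $k=\tfrac{n}{2}$, i.e. $\deg\rho = n < n+2$, which holds; and verifying positivity of $\rho$ on $[-1,1]$, which is immediate since $\cosh\ge 1\ge -\cos$. Once these are in place the computation is a direct transcription of the argument already given for Theorem~\ref{th1.0}.
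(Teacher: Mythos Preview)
Your plan has a genuine gap exactly where you flag the ``main obstacle,'' and it is fatal rather than merely technical. Write $\sin(kn\sin^{-1}t)=\sin(n\sin^{-1}t)\cdot U_{k-1}\!\bigl(\cos(n\sin^{-1}t)\bigr)$; since $\cos(n\sin^{-1}t)=T_{n/2}(1-2t^2)$ is a polynomial of degree $n$ in $t$, the factor $U_{k-1}(\cdots)$ has degree $(k-1)n$, and the polynomial you must integrate against the Bernstein--Szeg\H{o} weight has degree $kn-2$. Oddness of $k$ only makes $U_{k-1}$ even; it does \emph{not} bound the degree independently of $k$. So for $k\ge 3$ (and already for $k=1$ when $n\ge 4$) this polynomial is far too large to coincide with a constant multiple of $K_{n/2}(t,0)$, and the reproducing-kernel trick of Section~\ref{pth1} cannot be invoked. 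Your Bernstein--Szeg\H{o} degree check is also off: when $\nu=n/2$ is even, $\deg\rho=n$, so the hypothesis $\deg\rho<2\cdot(n/2)$ actually \emph{fails} and formula \eqref{pk} for $p_{n/2}$ would need the modification the paper warns about.

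The paper proves this theorem by an entirely different, computational route that does not use the orthogonal-polynomial machinery at all. Because the denominator here is the diagonal case $m=n$, its roots are explicit, and the paper writes down the partial-fractions expansion of $\dfrac{\sin(n\sin^{-1}t)}{t\sqrt{1-t^2}\,\rho(t)}$ in $t^2$, substitutes $t=\sin(\varphi/2)$, expands each term as a geometric series together with $\sin(k\nu\varphi)/\sin(\nu\varphi)=1+2\sum_{l=1}^{\mu}\cos(2\nu l\varphi)$, and integrates term-by-term using orthogonality of cosines on $(0,\pi)$. The dependence on $k$ survives only as extra powers $q_j^{2\nu l}$ inside a finite sum $\sum_{j=1}^{\nu}f(j)$, which is then evaluated via the symmetry $f(j)+f(\nu+1-j)=1$. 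That symmetry is the real engine behind the $k$-independence of the answer; there is no kernel-polynomial shortcut here.
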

\begin{proof} Let $n=2\nu$, $k=2\mu+1$, where $\nu$ is a positive integer, and $\mu$ is a nonnegative integer. Similar to Section 3 of \cite{nicholson}, or by other means, one can derive the partial fractions expansion
\[
    \frac{1}{\cos (2\nu \sin^{-1}{t})+\cosh (2\nu \sinh^{-1}{t})}\,\frac{\sin (2\nu \sin^{-1}{t})}{t\sqrt{1-t^2}}=\frac{1}{\nu}\sum_{j=1}^{\nu}\frac{i-\cos\frac{\pi(2j-1)}{2\nu}}{2t^2\cos\frac{\pi(2j-1)}{2\nu}+i\sin^2\frac{\pi(2j-1)}{2\nu}}.
\]
Further calculations assume that $\nu$ is even. When $\nu$ is odd, calculations are similar, except that one has to take special care of the term with $j=(\nu+1)/2$. Thus, define 
\[
q_j=\frac{1-\sin\frac{\pi(2j-1)}{2\nu}}{\cos\frac{\pi(2j-1)}{2\nu}}\,e^{-i\frac{\pi(2j-1)}{2\nu}},\qquad j=1,2,\ldots, \nu.
\]
Obviously,
\[
|q_j|<1, \qquad j=1,2,\ldots, \nu.
\]
We are going to make the change of variables
\[
t=\sin(\varphi/2),\quad \varphi\in(0,\pi),
\]
in the integral \eqref{353m}. Thus $2t^2=1-\cos\varphi$, and $4\sqrt{1-t^2}\,dt=(1+\cos\varphi)\,d\varphi$. By simple algebra
\[
\frac{1+\cos\varphi}{(1-\cos\varphi)\cos\frac{\pi(2j-1)}{2\nu}+i\sin^2\frac{\pi(2j-1)}{2\nu}}=\frac{-2}{(1-q_j)\cos\frac{\pi(2j-1)}{2\nu}}\left(1+(1+q_j)\frac{1-q_j\cos\varphi}{1-2q_j\cos\varphi+q_j^2}\right).
\]
According to well known formulas
\[
\frac{1-q_j\cos\varphi}{1-2q_j\cos\varphi+q_j^2}=\sum_{r=0}^\infty q_j^r\cos(r\varphi),
\]
\[
\frac{\sin(k\nu\varphi)}{\sin(\nu\varphi)}=1+2\sum_{l=1}^{\mu}\cos(2\nu l\varphi).
\]
Thus, the integral \eqref{353m} becomes
\[
I=\frac{1}{2\nu}\sum_{j=1}^{\nu}\frac{\cos\frac{\pi(2j-1)}{2\nu}-i}{(1-q_j)\cos\frac{\pi(2j-1)}{2\nu}}\int\limits_0^{\pi}\left(1+(1+q_j)\sum_{r=0}^\infty q_j^r\cos(r\varphi)\right)\left(1+2\sum_{l=1}^{\mu}\cos(2\nu l\varphi)\right)\,d\varphi.
\]
The integrals are easily calculated using orthogonality of cosines on $(0,\pi)$:
\[
I=\frac{\pi}{2\nu}\sum_{j=1}^{\nu}f(j),\qquad f(j)=\frac{\cos\frac{\pi(2j-1)}{2\nu}-i}{(1-q_j)\cos\frac{\pi(2j-1)}{2\nu}}\left(2+q_j+(1+q_j)\sum_{l=1}^\mu q_j^{2\nu l}\right).
\]
Trivial algebra (under the transformation $j\to \nu+1-j$ the expression $\cos\frac{\pi(2j-1)}{2\nu}$ changes sign, while $\sin\frac{\pi(2j-1)}{2\nu}$ and $q_j^{2\nu}$ do not change) shows that
\[
f(j)+f(\nu+1-j)=1, \qquad j=1,2,\ldots, \nu.
\]
Hence $\sum_{j=1}^{\nu}f(j)=\nu/2$, and $I=\pi/4$.
\end{proof}

One can obtain a finite analog of Theorem 4.2 from \cite{berndt} multiplying the integrand in \eqref{353m} by $t^{4b}$, $b\in \mathbb{N}$.

\section{Finite analogs of integrals related to theta series}\label{theta1}
Consider the finite trigonometric sum
\begin{equation}\label{trig_sum}
    S(n,m)=\sum _{j=0}^{n/2} (-1)^{j} \sin \frac{\pi  (2 j+1)}{2 n}\,\Big\{\!\cos\frac{\pi  (2 j+1)}{2 n}\Big\}^{m-1}.
\end{equation}

\begin{Th}\label{tt} Let $m>1$ and $n$ be positive odd integers. Then
    \begin{equation}\label{theta}
    \int\limits_{0}^{1}\frac{\sin \bigl(n \sin^{-1}\sqrt{t}\bigr)\sinh \bigl(n \sinh^{-1}\sqrt{t}\bigr)}{\cos \bigl(2n \sin^{-1}\sqrt{t}\bigr)+\cosh \bigl(2n \sinh^{-1}\sqrt{t}\bigr)}\,\frac{\sin\bigl(m\sin^{-1}t\bigr)}{\sqrt{1-t^2}}\,dt=\frac{\pi}{4n}\,S(n,m).
\end{equation}
\end{Th}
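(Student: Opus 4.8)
\emph{Proof sketch.} The plan is to follow the method of Section~\ref{rq}: reduce the integral to a single Fourier coefficient of a rational function of $\sin\phi$, expand that rational function in partial fractions, and evaluate the resulting elementary integrals by orthogonality. Putting $t=\sin\phi$, $\phi\in[0,\tfrac{\pi}{2}]$ (so that $\tfrac{dt}{\sqrt{1-t^2}}=d\phi$ and $\sin^{-1}\!\sin\phi=\phi$), the left side of \eqref{theta} becomes
\[
\int_0^{\pi/2}\frac{\eta(\sin\phi)}{\rho(\sin\phi)}\,\sin(m\phi)\,d\phi,
\]
where $\eta(t)=\sin(n\sin^{-1}\!\sqrt t)\sinh(n\sinh^{-1}\!\sqrt t)$ and $\rho(t)=\cos(2n\sin^{-1}\!\sqrt t)+\cosh(2n\sinh^{-1}\!\sqrt t)$ are, by Section~\ref{function}, polynomials in $t$ of degrees $n$ and $n-1$. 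Since $\rho(\sin\phi)$ never vanishes for real $\phi$, the function $\tfrac{\eta(\sin\phi)}{\rho(\sin\phi)}$ is smooth, is odd in $\phi$, and is invariant under $\phi\mapsto\pi-\phi$, so its (uniformly convergent) Fourier expansion contains only the harmonics $\sin((2l+1)\phi)$; as $\int_0^{\pi/2}\sin((2l+1)\phi)\sin(m\phi)\,d\phi=\tfrac{\pi}{4}\,\delta_{l,(m-1)/2}$ for odd $m$, the whole problem reduces to computing the coefficient of $\sin(m\phi)$ in that expansion.

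To obtain it I would partial-fraction $\tfrac{\eta}{\rho}$ in $t$. The zeros of $\rho$ follow from $\rho(t)=2\cos(a-ib)\cos(a+ib)$ with $a=n\sin^{-1}\!\sqrt t$, $b=n\sinh^{-1}\!\sqrt t$ — equivalently from the zeros $\pm i/\sin(\pi k/n)$ of the Szeg\"o factor $h$ of \eqref{h} via $t=\tfrac12(z+z^{-1})$ — namely the $n-1$ simple points $\pm i\lambda_k$, $\lambda_k=\tfrac{\cos^2(\pi k/n)}{2\sin(\pi k/n)}$, $k=1,\dots,\tfrac{n-1}{2}$. Because $\eta$ is odd and $\rho$ even, $\tfrac{\eta(t)}{\rho(t)}=ct+\sum_k A_k\,\tfrac{2t}{t^2+\lambda_k^2}$ with $A_k=\eta(i\lambda_k)/\rho'(i\lambda_k)$. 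After $t=\sin\phi$, the linear term $c\sin\phi$ contributes nothing (its only harmonic is $l=0$, and $m>1$); for each $k$ the crucial elementary step is the identity $\sin^2\phi+\lambda_k^2=\tfrac{1-2r_k\cos2\phi+r_k^2}{4r_k}$ with $r_k=\sin^2(\pi k/n)$ — this is where one uses the simplification $\sqrt{\lambda_k^2+1}-\lambda_k=\sin(\pi k/n)$ — so the Poisson kernel gives $\tfrac{2\sin\phi}{\sin^2\phi+\lambda_k^2}=\sum_{l\ge0}\tfrac{8r_k^{l+1}}{1+r_k}\sin((2l+1)\phi)$, whence $\int_0^{\pi/2}\tfrac{2\sin\phi\,\sin(m\phi)}{\sin^2\phi+\lambda_k^2}\,d\phi=\tfrac{2\pi\sin^{m+1}(\pi k/n)}{1+\sin^2(\pi k/n)}$. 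Thus the integral equals $\sum_{k}A_k\cdot\tfrac{2\pi\sin^{m+1}(\pi k/n)}{1+\sin^2(\pi k/n)}$.

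For the residues I would differentiate $\rho=2\cos(a-ib)\cos(a+ib)$ at $t=i\lambda_k$ (where $\cos(a-ib)=0$): using $a'(t)=\tfrac{n}{2\sqrt t\sqrt{1-t}}$, $b'(t)=\tfrac{n}{2\sqrt t\sqrt{1+t}}$ and the factorizations $1\pm i\lambda_k=\pm i(1\mp i\sin(\pi k/n))^2/(2\sin(\pi k/n))$, the factor $\eta(i\lambda_k)$ cancels and one is left with $A_k=\bigl(4i\,\sin(a-ib)\,(a'-ib')\bigr)^{-1}$ at $t=i\lambda_k$, where $(a'-ib')|_{i\lambda_k}=\tfrac{-2in\sin^2(\pi k/n)}{\cos(\pi k/n)(1+\sin^2(\pi k/n))}$ and, from $e^{i(a-ib)}|_{i\lambda_k}=i^n e^{-i\pi k}$, $\sin(a-ib)|_{i\lambda_k}=(-1)^{(n-1)/2+k}$. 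This gives $A_k=\tfrac{(-1)^{(n-1)/2+k}(1+\sin^2(\pi k/n))\cos(\pi k/n)}{8n\sin^2(\pi k/n)}$, so the integral collapses to $\tfrac{\pi(-1)^{(n-1)/2}}{4n}\sum_{k=1}^{(n-1)/2}(-1)^k\cos(\tfrac{\pi k}{n})\sin^{m-1}(\tfrac{\pi k}{n})$. The reindexing $k\mapsto j=\tfrac{n-1}{2}-k$ sends $\tfrac{\pi k}{n}$ to its complement $\tfrac{\pi(2j+1)}{2n}$ (swapping $\sin\leftrightarrow\cos$) and $(-1)^k$ to $(-1)^{(n-1)/2}(-1)^j$; since the $j=\tfrac{n-1}{2}$ term of $S(n,m)$ vanishes ($\cos\tfrac{\pi}{2}=0$, where $m>1$ is used), the sum becomes $(-1)^{(n-1)/2}S(n,m)$ and the integral becomes $\tfrac{\pi}{4n}S(n,m)$, as claimed.

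The main obstacle is the residue computation — above all, fixing the sign $(-1)^{(n-1)/2+k}$, i.e. determining $e^{i(a-ib)}$ at the pole $t=i\lambda_k$, which amounts to tracking which odd multiple of $\tfrac{\pi}{2}$ the quantity $n(\sin^{-1}\!\sqrt t-i\sinh^{-1}\!\sqrt t)$ reaches there; this requires careful bookkeeping of branches (through $e^{i(\sin^{-1}\!\sqrt t-i\sinh^{-1}\!\sqrt t)}=(\sqrt{1-t}+i\sqrt t)(\sqrt t+\sqrt{1+t})$) and of the powers of $i$. By contrast, the partial-fraction setup, the Poisson-kernel integral, and the final reindexing are routine. (An alternative, essentially equivalent, route is to obtain the partial-fraction coefficients directly by the ad hoc method of Section~3 of \cite{nicholson}, trading the $h$-machinery for a more hands-on computation.)
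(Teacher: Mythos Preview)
Your proof is correct, and the overall architecture --- substitute $t=\sin\phi$, expand $\eta(\sin\phi)/\rho(\sin\phi)$ as a Fourier sine series in odd harmonics, then read off the $m$-th coefficient by orthogonality on $[0,\pi/2]$ --- is exactly the paper's. The difference lies entirely in how the Fourier coefficients are obtained.

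The paper exploits the single identity
\[
\sin^{-1}\!\sqrt{\sin\theta}+i\,\sinh^{-1}\!\sqrt{\sin\theta}=\cos^{-1}(e^{-i\theta}),
\]
which collapses the whole integrand to $1/T_n(e^{-i\theta})$; the known partial-fraction expansion of $1/T_n$ then gives the Fourier series with coefficients $S(n,2r+1)$ for free, and the imaginary part is \eqref{tsgf1}. No residues, no branch tracking.

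Your route --- partial-fractioning $\eta/\rho$ directly in $t$, locating the poles $\pm i\lambda_k$ via the zeros of $h$, computing the residues $A_k$ through $a'-ib'$ and the sign $\sin(a-ib)\bigl|_{i\lambda_k}=(-1)^{(n-1)/2+k}$, and then feeding $t=\sin\phi$ into a Poisson-kernel expansion --- recovers the same coefficients termwise (your $r_k^{(m+1)/2}=\sin^{m+1}(\pi k/n)$ matches the paper's $\cos^{m-1}(\pi(2j+1)/(2n))$ after the reindexing $k\leftrightarrow (n-1)/2-j$). I checked the delicate steps: the factorizations $1\pm i\lambda_k=\pm i(1\mp is)^2/(2s)$, the value $e^{i(u-iv)}\bigl|_{i\lambda_k}=ie^{-i\pi k/n}$, the derivative $(a'-ib')\bigl|_{i\lambda_k}=-2ins^2/\bigl(c(1+s^2)\bigr)$, and the resulting $A_k$; they are all right, and your uses of $m>1$ (killing both the linear term $c\sin\phi$ and the $j=(n-1)/2$ term of $S(n,m)$) are placed correctly. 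What your approach buys is that it works without spotting the Chebyshev identity; what the paper's approach buys is that the entire residue/branch computation you flag as the ``main obstacle'' simply disappears.
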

\begin{proof} We make the change of variables $t=\sin\theta$ in the integral. From
\[
 \sin^{-1}\sqrt{\sin\theta}+i\sinh^{-1}\sqrt{\sin\theta}=\cos^{-1}\bigl(e^{-i\theta}\bigr),\qquad 0\le\theta\le\frac{\pi}{2}
\]
(Section \ref{function}), and the well known partial fractions expansion formula for the reciprocal of the Chebyshev polynomial of the first kind
\[
\frac{1}{T_k(z)}=\frac{1}{k}\sum _{j=0}^{k-1} (-1)^{j} \frac{\sin\frac{\pi  (2 j+1)}{2 k}}{z-\cos\frac{\pi  (2 j+1)}{2 k}},
\]
it follows that
\begin{equation}\label{pf0}
    \frac{1}{\cos \bigl(k \sin^{-1}\sqrt{\sin\theta}+ik \sinh^{-1}\sqrt{\sin\theta}\bigr)}=\frac{1}{k}\sum _{j=0}^{k-1} (-1)^{j} \frac{\sin\frac{\pi  (2 j+1)}{2 k}}{e^{-i\theta}-\cos\frac{\pi  (2 j+1)}{2 k}},\qquad k\in\mathbb{N}.
\end{equation}
For odd $n$ one obtains from this
\addtocounter{equation}{1}
\begin{equation}\label{tsgf3}
    \frac{1}{\cos \bigl(n \sin^{-1}\sqrt{\sin\theta}+in \sinh^{-1}\sqrt{\sin\theta}\bigr)}=\frac{1}{n}\sum_{r=1}^\infty S(n,2r+1)\,e^{i(2r+1)\theta},\tag{44}
\end{equation}
with $S(n,2r+1)$ defined in \eqref{trig_sum}. Taking the imaginary part of \eqref{tsgf3} results in
\begin{equation}\label{tsgf1}
    \frac{\sin \bigl(n \sin^{-1}\sqrt{\sin\theta}\bigr)\sinh \bigl(n \sinh^{-1}\sqrt{\sin\theta}\bigr)}{\cos \bigl(2n \sin^{-1}\sqrt{\sin\theta}\bigr)+\cosh \bigl(2n \sinh^{-1}\sqrt{\sin\theta}\bigr)}=\frac{1}{n}\sum_{r=1}^\infty S(n,2r+1)\sin(2r+1)\theta.\tag{44a}
\end{equation}
Thus the integral becomes
\[
\frac{1}{n}\sum_{r=1}^\infty S(n,2r+1)\int\limits_0^{\pi/2}\sin\{(2r+1)\theta\}\sin(m\theta)\,d\theta.
\]
Now the result follows by orthogonality of the system of functions $\big\{\!\sin{(2k+1)\theta}\big\}_{k=0}^\infty$ on $[0,\pi/2]$ (system of eigenfunctions of the Sturm-Liouville problem $f''+\lambda f=0,~f(0)=f'(\pi/2)=0$).
\end{proof}
The companion formula to \eqref{tsgf1}, which follows from \eqref{tsgf3} by taking the real part, is
\begin{equation}\label{tsgf2}
    \frac{\cos \bigl(n \sin^{-1}\sqrt{\sin\theta}\bigr)\cosh \bigl(n \sinh^{-1}\sqrt{\sin\theta}\bigr)}{\cos \bigl(2n \sin^{-1}\sqrt{\sin\theta}\bigr)+\cosh \bigl(2n \sinh^{-1}\sqrt{\sin\theta}\bigr)}=\frac{1}{n}\sum_{r=1}^\infty S(n,2r+1)\cos(2r+1)\theta,\tag{44b}
\end{equation}
with the same $S(n,2r+1)$ as in \eqref{trig_sum}, and for odd $n$. It leads to integration formula analogous to \eqref{theta}. Moreover, if we replace $m$ by $mn$ in \eqref{theta}, multiply both sides by $n^2$, make change of variables in the integral $t\to t/n^2$ and let $n,m\to\infty$ such that $m/n\to a$ in the resulting formula, we get
\[
\int\limits_0^\infty \frac{\sin \bigl(\sqrt{x}\bigr)\sinh \bigl(\sqrt{x}\bigr)}{\cos \bigl(2 \sqrt{x}\bigr)+\cosh \bigl(2\sqrt{x}\bigr)}\,\sin(a x)\,dx=\frac{\pi^2}{8}\sum_{j=0}^\infty(-1)^{j}(2j+1)\,e^{-{\pi^2 (2j+1)^2a}/{8}}
\] 
(for justification of the limiting process for a similar series see \cite{polya}). This means that \eqref{theta} and its companion formula are finite analogs of two integrals due to Glaisher, equations (21) and (22) in \cite{glaisher}.

There is extensive literature on finite trigonometric sums. Sums similar to \eqref{trig_sum} are studied for example in \cite{fonseca} and \cite{cadavid} using various techniques. These papers also discuss different contexts such sums arise in, and also contain survey of the earlier literature on the topic. In particular, finite trigonometric sums have been studied using generating functions. For example, generating functions for finite sums (of reciprocals) of trigonometric functions were calculated in \cite{wang} (this paper generalizes some generating functions from the earlier literature). Equations \eqref{tsgf3}, \eqref{tsgf1} and \eqref{tsgf2} are generating functions for the finite trigonometric sum \eqref{trig_sum}. In fact, Theorem \ref{tt} is equivalent to the generating function \eqref{tsgf1}. The generating functions in \cite{wang} contain an extra free parameter, moreover they are simpler in form in the sense that they contain just one Chebyshev polynomial in the denominator. Equations \eqref{tsgf1} and \eqref{tsgf2} contain two Chebyshev polynomials in the denominator, of the same order but of different arguments. Functions that contain two Chebyshev polynomials of different orders but of the same argument arise as generating functions for probablities of random walks with boundaries, (\cite{giuggioli}, equations (2),(3),(12)).

There is a one-parameter generalization of \eqref{pf0}:
\begin{Th} Let $c\in[0,1)$, $\theta\in[0,\pi]$, and $n\in\mathbb{N}$. Then
\begin{equation}\label{pf}
    \bigg\{\!\cos \bigg(\!n \sin^{-1}\!\sqrt{\frac{\sin\theta+c}{1+c}}+in \sinh^{-1}\!\sqrt{\frac{\sin\theta+c}{1-c}}\bigg)\!\bigg\}^{-1}=\frac{1}{n}\sum _{j=0}^{n-1} \frac{(-1)^{j} \sqrt{1-c^2}\,\sin\frac{\pi  (2 j+1)}{2 n}}{e^{-i\theta}-\sqrt{1-c^2}\cos\frac{\pi  (2 j+1)}{2n}-ic}.
\end{equation}
\end{Th}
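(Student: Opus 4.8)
The plan is to reduce \eqref{pf} to the partial fraction expansion of $1/T_n$ recalled in the proof of Theorem \ref{tt}, by rewriting the denominator on the left as $T_n$ of an explicit quantity. Write $x=\frac{\sin\theta+c}{1+c}$ and $y=\frac{\sin\theta+c}{1-c}$; since $c\in[0,1)$ and $\sin\theta\ge 0$ both are nonnegative, so $\sin^{-1}\sqrt{x}$ and $\sinh^{-1}\sqrt{y}$ are real and the left-hand side of \eqref{pf} is $\{\cos(n\alpha)\}^{-1}$ with $\alpha=\sin^{-1}\sqrt{x}+i\sinh^{-1}\sqrt{y}$.

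First I would invoke the (entire) identity $T_n(\cos w)=\cos(nw)$ to write $\cos(n\alpha)=T_n(w)$ with $w=\cos\alpha$, and then compute $w$ explicitly. Expanding
\[
\cos\alpha=\cos\bigl(\sin^{-1}\sqrt{x}\bigr)\cosh\bigl(\sinh^{-1}\sqrt{y}\bigr)-i\,\sin\bigl(\sin^{-1}\sqrt{x}\bigr)\sinh\bigl(\sinh^{-1}\sqrt{y}\bigr)
\]
and using the principal-branch values $\cos(\sin^{-1}\sqrt{x})=\sqrt{1-x}$, $\cosh(\sinh^{-1}\sqrt{y})=\sqrt{1+y}$, $\sin(\sin^{-1}\sqrt{x})=\sqrt{x}$, $\sinh(\sinh^{-1}\sqrt{y})=\sqrt{y}$ gives $w=\sqrt{(1-x)(1+y)}-i\sqrt{xy}$. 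The algebra $(1-x)(1+y)=\frac{(1-\sin\theta)(1+\sin\theta)}{(1+c)(1-c)}=\frac{\cos^2\theta}{1-c^2}$ and $xy=\frac{(\sin\theta+c)^2}{1-c^2}$ then collapses this (taking $\sqrt{1-\sin^2\theta}=\cos\theta$) to
\[
w=\frac{\cos\theta-i(\sin\theta+c)}{\sqrt{1-c^2}}=\frac{e^{-i\theta}-ic}{\sqrt{1-c^2}}.
\]

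To finish, the left-hand side of \eqref{pf} is $1/T_n(w)$ with this $w$, and $T_n(w)\ne 0$ since the zeros of $T_n$ are the reals $\cos\frac{\pi(2j+1)}{2n}$ while $\operatorname{Im}w=-\frac{\sin\theta+c}{\sqrt{1-c^2}}<0$ (the only exception $\theta=0$, $c=0$ gives $w=1$, not a zero). Substituting $z=w$ into $\frac{1}{T_n(z)}=\frac{1}{n}\sum_{j=0}^{n-1}(-1)^{j}\frac{\sin\frac{\pi(2j+1)}{2n}}{z-\cos\frac{\pi(2j+1)}{2n}}$ and multiplying numerator and denominator of each term by $\sqrt{1-c^2}$ produces exactly the right-hand side of \eqref{pf}; the case $c=0$ recovers \eqref{pf0}.

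Most of this is routine (the $\cos\alpha$ expansion and the simplification of $(1-x)(1+y)$ and $xy$). The one delicate point — and what I would be most careful about — is the branch bookkeeping: the step $\sqrt{(1-x)(1+y)}=\frac{\cos\theta}{\sqrt{1-c^2}}$ uses $\sqrt{1-\sin^2\theta}=\cos\theta$, i.e. $\cos\theta\ge 0$, so this is really what pins down the admissible range of $\theta$, and one should state the branches of the square roots (and the hypotheses on $\theta$) accordingly.
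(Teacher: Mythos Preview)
Your proof is correct and follows the same route the paper uses for the $c=0$ case \eqref{pf0} (the paper does not give a separate proof of \eqref{pf}, merely announces it as a one-parameter generalization): identify the left side as $1/T_n(w)$ for the explicit $w=(e^{-i\theta}-ic)/\sqrt{1-c^2}$ and substitute into the partial-fraction expansion of $1/T_n$. Your caveat about the branch is well taken and in fact necessary: since the left side depends only on $\sin\theta$ while the right side does not, the identity as printed fails for $\theta\in(\pi/2,\pi]$ (e.g.\ $n=1$, $c=0$, $\theta=\pi$ gives $1\neq -1$), so the correct hypothesis is $\theta\in[0,\pi/2]$.
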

A generalization of Theorem \ref{tt} can be obtained from \eqref{pf}.

\section{Weights with the difference \texorpdfstring{$\cosh-\cos$}{cosh-cos} in the denominator}\label{appendix}

Consider the positive weight function on $\mathbb{R}$
\[
\frac{x}{\cosh \bigl(2\sqrt{x/a}\bigr)-\cos \bigl(2 \sqrt{x}\bigr)},\qquad a>0,
\]
which has a removable singularity at $x=0$. The symmetric $a=1$ case of the integrals with such functions was studied in \cite{berndt}, and the non-symmetric case in \cite{bradshaw}. The weight functions on $[-a,1]$
\[
\frac{t}{\cosh \bigl(2 m \sinh^{-1}\!\sqrt{t/a}\bigr)-\cos \bigl(2 n \sin^{-1}\!\sqrt{t}\bigr)}\frac{1}{\sqrt{(1-t)(a+t)}}
\]
approximate the above functions in the limit $n,m\to\infty$. Below we briefly discuss how do the formulas in the main text change for such weight functions.

Define the function
\[
\rho_a(t)=\frac{\cosh \bigl(2 m \sinh^{-1}\!\sqrt{t/a}\bigr)-\cos \bigl(2 n \sin^{-1}\!\sqrt{t}\bigr)}{t}
=\biggl|\sqrt{\frac{2}{t}}\,\sin \bigl( n \sin^{-1}\sqrt{t}-im \sinh^{-1}\sqrt{t/a}\bigr)\biggr|^2.
\]
Using the substitution
\[
t=\frac{1}{2}\bigl\{1-a+(1+a)\cos\theta\bigr\},\quad \theta\in[0,\pi],
\]
one can prove that
\[
\sqrt{\frac{2}{t}}\,\sin \bigl( n \sin^{-1}\sqrt{t}-im \sinh^{-1}\sqrt{t/a}\bigr)=i^{n-1}\,e^{-i (n+m-1)\theta/2}\,h_a\bigl(e^{i\theta}\bigr),
\]
\begin{align*}
    \nonumber h_a(z)&=\sqrt{\frac{2}{a+1}}\left(\frac{a+1}{2\sqrt{a}}\right)^{m}\{\sigma(z)\}^{-1}\\&\times \left\{\bigl[\sigma(z)+1+cz\bigr]^{\frac{m+n}{2}} \bigl[\sigma(z)+c+z\bigr]^{\frac{m-n}{2}}
    -(-1)^n\bigl[\sigma(z)-1-cz\bigr]^{\frac{m+n}{2}} \bigl[\sigma(z)-c-z\bigr]^{\frac{m-n}{2}}\right\},
\end{align*}
where
\[
\sigma(z)=\sqrt{z^2+2cz+1},\qquad c=\frac{1-a}{1+a}.
\]
$h_a(z)$ is a polynomial in $z$. One can show that $h_a(z)$ satisfies the conditions of Lemma \ref{lemma} (see also the note at the end of Section \ref{pth1}). Thus the normalized representation required by Theorem \ref{sb} in this case is
\[
{\rho}(\cos\theta)=\rho_a(t)=|h_a(e^{i\theta})|^2.
\]

Using \eqref{pk} we find the following orthogonal polynomials with known roots for the weight function under consideration: When $n$ is odd and $m$ is even
\[
p_{\frac{m+n-1}{2}}(t)=\frac{2}{\sqrt{\pi t}}\,\sin \bigl(n \sin^{-1}\!\sqrt{t}\bigr)\cosh \bigl(m \sinh^{-1}\!\sqrt{t/a}\bigr);
\]
when $n$ is even and $m$ is odd
\[
p_{\frac{m+n-1}{2}}(t)=\frac{2}{\sqrt{\pi t}}\,\cos \bigl(n \sin^{-1}\!\sqrt{t}\bigr)\sinh \bigl(m \sinh^{-1}\!\sqrt{t/a}\bigr).
\]

Now we can find explicit Gauss quadrature formulas. If $n$ is odd and $m$ is even, and $p(t)$ is a polynomial of degree at most $m+n-1$ such that $p(0)=0$, then
\begin{align*}
    \nonumber\int\limits_{-a}^{1}&\frac{p(t)}{\cosh \bigl(2 m \sinh^{-1}\!\sqrt{t/a}\bigr)-\cos \bigl(2 n \sin^{-1}\!\sqrt{t}\bigr)}\frac{dt}{\sqrt{(1-t)(a+t)}}\\
    &=\frac{2 \pi }{n}\sum_{i=1}^{n/2}\frac{\tanh\frac{\alpha_{2i}}{2n}}{\sinh\bigl(\frac{m}{n}\alpha_{2i}\bigr)}\,p\bigl(\sin^2\tfrac{\pi i}{n}\bigr)-\frac{2 \pi}{m}\sum_{j=1}^{m/2}\frac{\tanh\frac{\beta_{2j-1}}{2m}}{\sinh\bigl(\frac{n}{m}\beta_{2j-1}\bigr)}\,p\bigl(-a\sin^2\tfrac{\pi (2j-1)}{2m}\bigr),
\end{align*}
where $\alpha_z$ and $\beta_z$ are defined in \eqref{ab}. Quadrature formula for even $n$ and odd $m$ is obtained from the above formula by the change of variables $t\to -t/a$, and subsequent redefinitions of the parameters $a\to 1/a$, $m\to n$, $n\to m$ and the polynomial $p(t)$. The integral has the same limiting form as $n,m\to\infty$ irrespective of the parity of the integers $n$ and $m$: For any polynomial $p(x)$ and $\alpha>0$
\begin{align*}
    \frac{1}{4\pi^4}\int\limits_{\mathbb{R}} \frac{xp(x)}{\!\cosh \bigl(\alpha\sqrt{x}\bigr)-\cos \bigl(\sqrt{x}\bigr)}\,dx&=\sum_{\substack{j\ge 1\\\mathrm{even}} }\frac{j^3}{\sinh(\pi\alpha j)}\,p\bigl(\pi^2j^2\bigr)+\frac{1}{\alpha^4}\sum_{\substack{j\ge 1\\\mathrm{odd}} }\frac{j^3}{\sinh\frac{\pi j}{\alpha}}\,p\Bigl(-\tfrac{\pi^2j^2}{\alpha^{2}}\Bigr)\\
    &=\sum_{\substack{j\ge 1\\\mathrm{odd}} }\frac{j^3}{\sinh(\pi\alpha j)}\,p\bigl(\pi^2j^2\bigr)+\frac{1}{\alpha^4}\sum_{\substack{j\ge 1\\\mathrm{even}} }\frac{j^3}{\sinh\frac{\pi j}{\alpha}}\,p\Bigl(-\tfrac{\pi^2j^2}{\alpha^{2}}\Bigr).
\end{align*} 
Detailed study of the corresponding indeterminate moment problem is given in \cite{ivy} (in particular, Theorem 4.4.4), where parametrization in terms of elliptic integrals is used: $\alpha=K'/K$.

A particular form of the quadrature formula is: Let $n$ and $m$ be positive integers, $u$ be an odd integer, and let $\alpha_z$ and $\beta_z$ be defined as in \eqref{ab}, then
\begin{align*}
    \int\limits_{-a}^1&\frac{\sqrt{t}\,\sin \bigl(u\sin ^{-1}\sqrt{t}\bigr)}{\cosh \bigl(2 m \sinh ^{-1}\sqrt{t/a}\bigr)-\cos \bigl(2 n \sin ^{-1}\sqrt{t}\bigr)}\frac{dt}{\sqrt{(1-t)(a+t)}}\\&{\phantom{......................}}=\frac{\pi}{2n\sqrt{a}}\sum_{j=1}^{2n-1}(-1)^{j}\,\frac{\sin^2\frac{\pi j}{2n}}{\cosh\frac{\alpha_j}{2n}}\left\{\coth\frac{m\alpha_j}{2 n}\right\} ^{(-1)^j}\!\!\!\!\cdot\sin\frac{\pi j u}{2n},\qquad |u|<2n-2,\\
    &{\phantom{......................}}=\frac{\pi a}{2n}\sum_{j=1}^{2m-1}(-1)^{j}\,\frac{\sin^2\frac{\pi j}{2m}}{\cosh\frac{\beta_j}{2m}}\left\{\coth\frac{n\beta_j}{2 m}\right\} ^{(-1)^j}\!\!\!\!\cdot\sinh\frac{\beta_j u}{2m},\qquad |u|<2m-2.
\end{align*}
This is a finite analog of the generating function derived in \cite{bradshaw}.

Now consider the weight function
\[
\sqrt{(1-t)(a+t)}\,\{\widetilde{\rho}_a(t)\}^{-1},
\]
where
\[
\widetilde{\rho}_a(t)=\frac{1}{t^2}\bigl\{\cosh \bigl(2 m \sinh^{-1}\!\sqrt{t/a}\bigr)-\cos \bigl(2 n \sin^{-1}\!\sqrt{t}\bigr)\bigr\}\bigl\{\cosh \bigl(2 m' \sinh^{-1}\!\sqrt{t/a}\bigr)-\cos \bigl(2 n \sin^{-1}\!\sqrt{t}\bigr)\bigr\}.
\]
When $m+m'$ is even, we find the following orthogonal polynomial for this weight function
\[
p_{\frac{m+m'}{2}+n-2}(t)=\sqrt{\frac{2}{\pi}}\,\frac{\sin \bigl(2n \sin^{-1}\!\sqrt{t}\bigr)\sinh \bigl((m+m') \sinh^{-1}\!\sqrt{t/a}\bigr)}{t\sqrt{(1-t)(a+t)}}.
\]
The limiting form of the corresponding quadrature formula is
\begin{align*}
    &\frac{1}{2\pi^6}\int\limits_{\mathbb{R}} \frac{x^2p(x)}{\big\{\!\cosh \bigl(\alpha\sqrt{x}\bigr)-\cos \bigl(\sqrt{x}\bigr)\big\}\big\{\!\cosh \bigl(\beta\sqrt{x}\bigr)-\cos \bigl(\sqrt{x}\bigr)\big\}}\, dx\\
    &=\sum_{j\ge 1}\frac{ j^5}{\sinh\frac{\pi(\alpha+\beta)j}{2}}\,\frac{p\bigl(\pi^2j^2\bigr)}{\cosh\frac{\pi(\alpha+\beta)j}{2}-(-1)^j\cosh\frac{\pi(\alpha-\beta)j}{2}}+\frac{64}{(\alpha+\beta)^6}\sum_{j\ge 1}\frac{j^5}{\sinh\frac{2\pi j}{\alpha+\beta}}\,\frac{p\left(-\frac{4\pi^2j^2}{(\alpha+\beta)^2}\right)}{\cosh\frac{2\pi j}{\alpha+\beta}-\cos\frac{2\pi\alpha j}{\alpha+\beta}}.
\end{align*} 

One could also consider mixed weight functions with
\[
\widetilde{\rho}_a(t)=\frac{1}{t}\bigl\{\cosh \bigl(2 m \sinh^{-1}\!\sqrt{t/a}\bigr)+\cos \bigl(2 n \sin^{-1}\!\sqrt{t}\bigr)\bigr\}\bigl\{\cosh \bigl(2 m' \sinh^{-1}\!\sqrt{t/a}\bigr)-\cos \bigl(2 n \sin^{-1}\!\sqrt{t}\bigr)\bigr\}.
\]
When $m+m'$ is even, the corresponding orthogonal polynomial is
\[
\frac{\sin \bigl(2n \sin^{-1}\!\sqrt{t}\bigr)\cosh \bigl((m+m') \sinh^{-1}\!\sqrt{t/a}\bigr)}{\sqrt{t(1-t)}}.
\]
The limiting form of the quadrature formula is
\begin{align*}
    &\frac{1}{2\pi^4}\int\limits_{\mathbb{R}} \frac{xp(x)}{\big\{\!\cosh \bigl(\alpha\sqrt{x}\bigr)+\cos \bigl(\sqrt{x}\bigr)\big\}\big\{\!\cosh \bigl(\beta\sqrt{x}\bigr)-\cos \bigl(\sqrt{x}\bigr)\big\}}\, dx\\
    &=\sum_{j\ge1}\frac{ j^3}{\cosh\frac{\pi(\alpha+\beta)j}{2}}\,\frac{p\bigl(\pi^2j^2\bigr)}{\sinh\frac{\pi(\alpha+\beta)j}{2}-(-1)^j\sinh\frac{\pi(\alpha-\beta)j}{2}}+\frac{2}{(\alpha+\beta)^4}\sum_{\substack{j\ge 1\\ \mathrm{odd}}}\frac{j^3}{\sinh\frac{\pi j}{\alpha+\beta}}\,\frac{p\left(-\frac{\pi^2j^2}{(\alpha+\beta)^2}\right)}{\cosh\frac{\pi j}{\alpha+\beta}+\cos\frac{\pi\alpha j}{\alpha+\beta}}.
\end{align*} 

Similar to \eqref{pf0}, one can deduce from the partial fractions expansion formula for the reciprocal of the Chebyshev polynomial of the second kind
\[
\frac{1}{(1-z^2)U_{k-1}(z)}=\frac{1}{2k}\sum _{j=1}^{2k}  \frac{(-1)^{j-1}}{z-\cos\frac{\pi  j}{k}},
\]
that
\[
\frac{e^{i\theta/2}}{\sqrt{2i\sin\theta}}\cdot\frac{1}{\sin \bigl(k \sin^{-1}\sqrt{\sin\theta}+ik \sinh^{-1}\sqrt{\sin\theta}\bigr)}=\frac{1}{2k}\sum _{j=1}^{2k} \frac{(-1)^{j-1} }{e^{-i\theta}-\cos\frac{\pi j}{k}},\qquad k\in\mathbb{N}.
\]

\section*{Acknowledgements}

We would like to thank Teruo \cite{teruo} and Po1ynomial \cite{po1ynomial} from math.stackexchange.com website for their stimulating and insightful answers to our question \#3595770.

\end{document}